\documentclass{amsart}
\usepackage{amsthm}
\usepackage{amssymb}
\usepackage{mathtools}
\usepackage{tikz-cd}

\usepackage{mathabx}
\usepackage{amsmath}
\usepackage[all]{xy}
\usepackage{undertilde}
\usepackage[enableskew]{youngtab} 
\theoremstyle{plain}
\newtheorem*{theorem*}{Theorem}
\newtheorem{theo}{Theorem}[section]
\newtheorem{cor}[theo]{Corollary}

\newtheorem{prop}[theo]{Proposition}
\newtheorem{lm}[theo]{Lemma}
\newtheorem{rem}[theo]{Remark}

\newtheorem*{lm*}{Lemma}
\newtheorem*{theo*}{Theorem}
\newtheorem*{cor*}{Corollary}
\newtheorem*{prop*}{Proposition}
\newtheorem*{def*}{Definition}
\newtheorem*{rem*}{Remark}

\begin{document}
	\title{Integral $Ext^2$ between hook Weyl modules}

	\author{Dimitra-Dionysia Stergiopoulou}
	\address{Department of Mathematics, University of Athens}
	\email{dstergiop@math.uoa.gr}

	\subjclass[2010]{Primary 20G05}
	
	\keywords{Extensions, Weyl modules, general linear group, hook}
	
	\date{June 5, 2020}

	\maketitle
	\begin{abstract}
		This paper concerns representations of the integral general linear group. The extension groups $Ext^2$ between any pair of hook Weyl modules are determined via a detailed study of cyclic generators and relations associated to certain extensions. As a corollary, the modular extension groups $Ext^1$ between such modules are determined.
	\end{abstract}
	
	\section {Introduction}	In the study of polynomial representations of the integral general linear group $GL_n$, the Weyl modules $\Delta(\lambda)$, indexed by partitions $\lambda$, play a central role.
	These modules have an explicit construction, enjoy a standard basis theorem and their characters are the classical Schur functions \cite{Gr}. Moreover, the heads of the modules $K\otimes \Delta(\lambda)$ form a complete set of inequivalent polynomial simple modules for $GL_n(K)$, where $K$ is an infinite field.  However, the structure of the modules $\Delta(\lambda)$ is not well understood.
	
	One of the important problems in the area is to determine the extension groups $Ext^i(\Delta(\lambda),\Delta(\mu))$. Relatively few explicit results are known. Modular extension groups for $GL_n(K)$ were studied in \cite{A}, when $\lambda$ consists of a single column and $\mu$ of a single row. For $SL_2(K)$, all modular extension groups between Weyl modules were described in \cite{Pa}, generalizing \cite{Er} and \cite{CE}. The modular extension groups $Ext^1(K\otimes\Delta(\lambda), K\otimes\Delta(\mu))$ for $GL_n(K)$ were determined in \cite{DG}, when $\mu$ consists of a single row.  
	
	Concerning the integral extension groups, $Ext^1(\Delta(\lambda),\Delta(\mu))$ for $GL_3$ was computed in \cite{BF}, when $\lambda$ and $\mu$ differ by a multiple of a positive root. The groups $Ext^1(\Delta(\lambda),\Delta(\mu))$ for $GL_n$ was determined in \cite{Ku2}, when $\lambda$ and $\mu$ differ by a single positive root. The groups $Ext^i(\Delta(h), \Delta (h(k)))$ were computed in \cite{MS} for $i=1,k$ and any pair of hooks $h =(a,1^b)$ and $h(k)=(a+k, 1^{b-k})$. The main method there was to determine cyclic generators of certain extension groups of the form $Ext^i(\Delta(h),D_{a+k}\otimes\Lambda^{b-k})$, where $D_*$ and $\Lambda^*$ denote the divided power algebra and the exterior algebra, respectively, of the natural $GL_n$-module. Except for some extreme cases, these generators usually have an involved form. The next step in our method was to calculate the images of these under canonical maps. The idea of using the `skew' extensions $Ext^i(\Delta(h),D_{a+k}\otimes\Lambda^{b-k})$, in place of the usual extensions $Ext^i(\Delta(h),\Delta(h(k)))$, comes from the observation that the former seem to have more manageable presentation matrices. The reason for this is that there is no straightening law involved.
	
	The purpose of the present paper is to study $Ext^2$ between any two hook Weyl modules using the above strategy. The main result is the following.
	
	\begin{theo} Consider hook partitions $h=(a,1^b)$ and $h(k)=(a-k,1^{b-k})$ for $k \ge 0$. Suppose $n\ge b+1$. Then, $Ext^2(\Delta(h), \Delta(h(k)))=0$, except possibly when $k\in\{2,3,4\},$ in which cases
		$$Ext^2(\Delta(h), \Delta(h(k))) = \begin{cases}  \mathbb{Z}_s, & \mbox {if}\;k=2, 
		\\ \mathbb{Z}_{3/t}, & \mbox {if}\;k=3,
		\\ \mathbb{Z}_{t}, & \mbox {if}\;k=4,
		\end{cases} $$
		where $s= {(a+b)}/{gcd(2,a+b)}$, $t={gcd(3,a+b)}$.
	\end{theo}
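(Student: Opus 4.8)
The plan is to adapt the strategy of \cite{MS}: rather than working directly with the target $\Delta(h(k))$, I would pass to the ``skew'' modules $D_{p}\otimes\Lambda^{q}$, whose decisive advantage is that maps into them involve no straightening law. Concretely, $\Delta(h(k))$ is the cokernel of an Akin--Buchsbaum--Weyman type complex, hence admits a finite exact resolution by modules $D_{p}\otimes\Lambda^{q}$ of length equal to the leg of $h(k)$, with augmentation the canonical surjection $D_{a-k}\otimes\Lambda^{b-k}\twoheadrightarrow\Delta(h(k))$. Splicing this into short exact sequences and applying $\mathrm{Hom}(\Delta(h),-)$ --- equivalently, running the spectral sequence of the resolution, with $E_{1}^{\,p,-j}=Ext^{\,p}(\Delta(h),D_{a-k+j}\otimes\Lambda^{b-k-j})\Rightarrow Ext^{\,p-j}(\Delta(h),\Delta(h(k)))$ --- expresses $Ext^{2}(\Delta(h),\Delta(h(k)))$ in terms of the skew extension groups $Ext^{\,2+j}(\Delta(h),D_{a-k+j}\otimes\Lambda^{b-k-j})$, $0\le j\le b-k$, together with the differentials and connecting maps induced by the complex. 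The first substantive point is then a vanishing statement: for $n\ge b+1$ these skew groups vanish in a wide enough range of homological degrees (sharpening the partial information in \cite{MS}) that the whole degree-$2$ diagonal is zero unless $k\in\{2,3,4\}$; this already gives $Ext^{2}(\Delta(h),\Delta(h(k)))=0$ for $k\notin\{2,3,4\}$.

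For $k\in\{2,3,4\}$ one must then compute the surviving skew groups $Ext^{\,2+j}(\Delta(h),D_{a-k+j}\otimes\Lambda^{b-k-j})$; the case $k=2$ is already in \cite{MS}, while $k=3,4$ are new. Here I would use the standard finite projective resolution $P_{\bullet}\to\Delta(h)\to 0$ of the hook Weyl module by divided-power modules $D_{\mu}=D_{\mu_{1}}\otimes\cdots\otimes D_{\mu_{r}}$, which are projective over $\mathbb{Z}$, and compute $Ext^{\,i}(\Delta(h),D_{p}\otimes\Lambda^{q})=H^{\,i}\,\mathrm{Hom}(P_{\bullet},D_{p}\otimes\Lambda^{q})$. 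Because the target is a genuine tensor product and not a Weyl module, each $\mathrm{Hom}(D_{\mu},D_{p}\otimes\Lambda^{q})$ is free abelian with a transparent combinatorial basis and no straightening relations, so the differentials of $\mathrm{Hom}(P_{\bullet},D_{p}\otimes\Lambda^{q})$ are explicit integer matrices. One then exhibits a cyclic generator of each relevant extension group and the integer --- a ratio of products of binomial coefficients in $a$, $b$, $k$ --- generating the relation annihilating it, thereby identifying the group as $0$, $\mathbb{Z}$, or a finite cyclic group.

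Finally one assembles. Feeding the skew computations into the spectral sequence above, the remaining differentials are again explicit integer matrices built from these generators, and computing their Smith normal forms --- a short gcd calculation --- collapses the degree-$2$ diagonal to a single cyclic group in each case. Since the surviving entries involve the total degree $a+b=|h|$, the order of the resulting generator comes out to be $(a+b)/\gcd(2,a+b)$, $3/\gcd(3,a+b)$ and $\gcd(3,a+b)$ for $k=2,3,4$ respectively, giving the claimed groups $\mathbb{Z}_{s}$, $\mathbb{Z}_{3/t}$, $\mathbb{Z}_{t}$. The hypothesis $n\ge b+1$ is used to ensure that $\Delta(h)$ and all intermediate shapes are nonzero, so that the stable polynomial extension groups are indeed the ones being computed.

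The main obstacle is the middle step: pinning down the explicit cyclic generators and their exact orders. Already in \cite{MS} these generators ``usually have an involved form'', and for $Ext^{2}$ --- and the higher $Ext^{2+j}$ occurring along the diagonal when $k=2,3,4$ --- they are more intricate still. The delicate work is to verify simultaneously that a proposed class is a cocycle, is not a coboundary, generates, and has the predicted order, carrying this out through combinatorial manipulation of maps between tensor products of divided and exterior powers and culminating in the number-theoretic extraction of $\gcd(2,a+b)$ and $\gcd(3,a+b)$ from ratios of binomial coefficients. A secondary difficulty is making the vanishing range of the first step sharp enough, and tracking the connecting homomorphisms in the spectral sequence so that the final assembly creates no spurious torsion.
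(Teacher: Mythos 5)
Your overall strategy coincides with the paper's: replace the target $\Delta(h(k))$ by the skew modules $D_{a+k}\otimes\Lambda^{b-k}$ via the Koszul complex, compute skew extension groups from the explicit projective resolution $P_*(a,b)$, where no straightening law intervenes, and transfer back. But your first substantive step has a genuine gap. The total-degree-$2$ diagonal of your spectral sequence consists of all the groups $Ext^{2+j}(\Delta(h),D_{a+k+j}\otimes\Lambda^{b-k-j})\cong Ext^{2+j}(\Lambda^{k+j+1},D_{k+j+1})$, $j\ge 0$, and only the $j=0$ term is controlled by the degree-reduction lemma together with Touz\'e's computation (which gives $Ext^{2}(\Lambda^{k+1},D_{k+1})=0$ precisely for $k\ne 2,3$); the higher terms need not vanish, and nothing in your sketch or in \cite{MS} shows that they do. Equivalently, in the splicing formulation: from the long exact sequence of $0\to\Delta(h(k+1))\to D_{a+k}\otimes\Lambda^{b-k}\to\Delta(h(k))\to 0$, the vanishing of $Ext^{2}(\Delta(h),D_{a+k}\otimes\Lambda^{b-k})$ kills $Ext^{2}(\Delta(h),\Delta(h(k+1)))$ only if the connecting map $Ext^{1}(\Delta(h),\Delta(h(k)))\to Ext^{2}(\Delta(h),\Delta(h(k+1)))$ is zero. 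That is not automatic; the paper secures it from the $Ext^{1}$ computations of \cite{MS} (Remark 3.1 here), which your proposal never invokes. So the claim that ``the whole degree-$2$ diagonal is zero for $k\notin\{2,3,4\}$'' is both unproved and, for the terms with $j\ge 1$, false in general; what is true, and all that is needed, is this low-degree statement.

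The second gap is that the decisive computations are only announced, and for $k=3$ your proposed mechanism cannot even decide the answer. For $k=4$ the entire content of the theorem is the identity $\phi(\pi(\Gamma_{a,b}))=(a+b)\pi(\gamma_{a,b})$: one must exhibit an explicit cyclic generator of $Ext^{2}(\Delta(h),D_{a+3}\otimes\Lambda^{b-3})\cong\mathbb{Z}_3$ and show that the map induced by the Koszul differential sends it to $(a+b)$ times the generator of $Ext^{2}(\Delta(h),D_{a+2}\otimes\Lambda^{b-2})\cong\mathbb{Z}_3$; this is exactly the ``main obstacle'' you defer, and it occupies most of the paper (the generator $\Gamma_{a,b}$ of Propositions 4.1--4.2, the relations of Subsection 4.2, and Proposition 4.6). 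For $k=3$, composing one step further along the Koszul complex detects nothing, because the next skew group $Ext^{2}(\Delta(h),D_{a+1}\otimes\Lambda^{b-1})$ is zero; the paper instead compares orders in the four-term exact sequence using the values $Ext^{2}(\Delta(h),\Delta(h(2)))=\mathbb{Z}_{d_2}$ and $Ext^{3}(\Delta(h),\Delta(h(3)))=\mathbb{Z}_{d_3}$ from Theorem 4.1 of \cite{MS} (and the case $k=2$ is likewise quoted from there). Without these inputs, or an explicit substitute for them, your Smith-normal-form ``assembly'' has no matrices to act on, and the claimed values $s=(a+b)/\gcd(2,a+b)$ and $t=\gcd(3,a+b)$ remain assertions rather than conclusions.
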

As a corollary, we determine the modular $Ext^1$ groups between any hook partitions.

In Section 2 we gather necessary preliminaries. In Section 3 we consider the cases $k\neq 4$. In Section 4, which costitutes the main part of the paper, we complete the proof of the theorem. As a corollary, we determine the dimensions of the modular $Ext^1$ groups between any hook Weyl modules. 
	
	\section{Preliminaries}
	We will use the notation of \cite{MS} but, for completeness, we recall the main points here and supplement with additional material. 
	\subsection{Notation.} 
	Let $F$ be a free abelian group of finite rank $n$.  Fixing a basis of $F$ yields an identification of general linear groups $GL(F)=GL_n(\mathbb{Z})$. We will be working with homogeneous polynomial representations of $GL_n(\mathbb{Z})$ of degree $r$, or equivalently, with modules over the Schur algebra $S_\mathbb{Z}(n,r)$  \cite{Gr}. We will write $S(n,r)$ in place of $S_\mathbb{Z}(n,r)$.
	By $DF=\sum_{i\geq 0}D_iF$ and $\Lambda F=\sum_{i\geq 0}\Lambda^{i}F$
	we denote the divided power algebra of $F$ and the exterior algebra of $F$ respectively. We will usually omit $F$ and write $D_i$ and $\Lambda^i$.
	
 Throughout this paper all tensor products are over the integers.
	
	For a partition $\lambda$ of $r$ with at most $n$ parts, we denote by $\Delta(\lambda)$ the corresponding Weyl module for $S(n,r)$. A hook $h$ is a partition of the form $h=(a,1^b)$. 
	
	The following complex of $S(n,r)$-modules (which is the dual of the usual Koszul complex) is exact 
	$$0 \rightarrow D_{a+b} \rightarrow ... \rightarrow D_{a+1} \otimes \Lambda^{b-1} \xrightarrow{\Theta_{a}}  D_{a} \otimes \Lambda^{b} \rightarrow ... \rightarrow \Lambda^{a+b} \rightarrow 0,$$
	where $\Theta_{a}$ is the composition 
	$\ D_{a+1}\otimes \Lambda^{b-1} \xrightarrow{\triangle \otimes 1} D_{a}\otimes D_1 \otimes \Lambda^{b-1}\xrightarrow{1 \otimes m} D_{a} \otimes \Lambda^{b},$
	where $\triangle$ (respectively, $m$) is the indicated component of the comultiplication (resp., multiplication) map of the Hopf algebra $DF$ (resp., $\Lambda F$). It is  well known that if $h=(a,1^b)$ is a hook, $b \geq 1$, then
	$\Delta (h) \simeq cok(\Theta_{a}) \simeq ker(\Theta_{a-1}).$
	
	Throughout this paper we use the notation $h=(a,1^b),h(k)=(a+k,1^{b-k})$ and $r=a+b.$
	
    \subsection{Straightening law and standard basis.} We recall the straightening law and the standard basis theorem for $\Delta(h)$ \cite{ABW}. Fix an ordered basis $e_1,\dots,e_n$ of $F$. For simplicity, we denote the element $e_i$ by $i$ and accordingly the element $e_{i_1}^{(a_1)} \cdots  e_{i_t}^{(a_t)} \otimes e_{j_1} \wedge \dots \wedge e_{j_b} \in D_a \otimes \Lambda^b$ by ${i_1}^{(a_1)}  \cdots  {i_t}^{(a_t)} \otimes {j_1} \cdots {j_b}$. The image of this element under the identification $\Delta (h) \simeq cok(\Theta_{a})$ will be denoted by ${i_1}^{(a_1)}  \cdots  {i_t}^{(a_t)} / {j_1} \cdots {j_b}$. Now suppose $i_1<i_2< \dots <i_t$ and $j_1 \le i_1$. Then in $\Delta(h)$ we have 
	$${i_1}^{(a_1)}\cdots {i_t}^{(a_t)} / {j_1} \cdots {j_b} = \begin{cases} -\sum\limits_{s\geq 2}{i_1}^{(a_1+1)}  \cdots {i_s}^{(a_s-1)}\cdots{i_t}^{(a_t)} / {i_s}{j_2} \cdots {j_b}, & \mbox{if}\;j_1=i_1 \\ -\sum\limits_{s\geq 1}j_1{i_1}^{(a_1)}  \cdots {i_s}^{(a_s-1)}\cdots {i_t}^{(a_t)} / {i_s}{j_2} \cdots {j_b}, & \mbox {if}\;j_1<i_1. \end{cases} $$
	
	\noindent A $\mathbb{Z}$ - basis of $\Delta(h)$ is the set of elements ${i_1}^{(a_1)}  \cdots {i_t}^{(a_t)} / {j_1} \cdots {j_b} $, where $a_1+\cdots+a_t=a$, $i_1<\dots<i_t$ and $i_1 <j_1< \dots <j_b.$
	\subsection{Resolutions of hooks.}	From \cite{Gr} or \cite{AB} we recall that for each sequence $a_1,\dots,a_n$ of non negative integers $a_i$ that sum to $r$, the $S(n,r)$-module $D_{a_1} \otimes \cdots \otimes D_{a_n}$ is projective. 
	
	We will use the explicit finite projective resolution $P_{*}(a,b)$ of $ \Delta (h)$,
	\[0 \longrightarrow \cdots \longrightarrow P_2(a,b) \xrightarrow{\Theta_{2}(a,b)} P_1(a,b) \xrightarrow{\Theta_{1}(a,b)} P_0(a,b)
	\]
	of \cite{Ma} which we now recall. For short, we denote the tensor product $D_{a_1} \otimes \cdots \otimes D_{a_m}$ of divided powers by $D(a_1,\dots,a_m)$. We have
	$P_i(a,b)=\sum D(a_1,\dots,a_{b+1-i})
	$  
	where the sum ranges over all sequences $(a_1,\dots,a_{b+1-i})$ of positive integers of length $b+1-i$ such that $a_1+\cdots+a_{b+1-i}=a+b$ and $a \le a_1 \le a+i$. The differential $\Theta_{i}(a,b)$ is defined be sending $x_1 \otimes \cdots \otimes x_{b+1-i} \in D(a_1,\dots,a_{b+1-i})$ to
	\[ \sum_{j=1}^{b+1-i} (-1)^{j+1}x_1 \otimes \cdots \otimes \triangle(x_j) \otimes \cdots  \otimes x_{b+1-i} \in D(a_1,\dots,u,v,\dots,a_{b+1-i}),
	\]
	where  $\triangle$ is the the two-fold diagonalization $D(a_j) \rightarrow \sum D(u,v)$, the sum ranges of all positive integers $u,v$ such that $u+v=a_j$ and $D(a_1,\dots,u,v,\dots,a_{b+1-i})$ is a summand of $P_{i-1}(a,b)$ with $u$ located in position $j$. Let $ \triangle_{u,v} : D(a_j) \rightarrow D(u,v)$ be the indicated component of the two-fold diagonalization $D(a_j) \rightarrow \sum D(u,v)$.

If $A,B$ are $S(n,r)$ - modules, we write $Hom(A,B)$ and $Ext^i(A,B)$ in place of $Hom_{S(n,r)}(A,B)$ and $Ext^i_{S(n,r)}(A,B)$ respectively.

We recall the recursions  \begin{align*}
&P_0(a,b)=D(a) \otimes P_0(1,b-1), \\ &P_i(a,b)=P_{i-1}(a+1,b-1) \oplus D(a) \otimes P_i(1,b-1), i>0 \end{align*}
and that under these identifications we have the following.
	\begin{rem} The differential $Hom(\Theta_i(a,b), M)$ of the complex \\ $Hom(P_*(a,b),M)$ is given by
		\begin{center}
		\begin{tikzcd}
			Hom(P_{i-2}(a+1,b-1),M) \arrow{r}  \arrow[d, phantom, "\oplus"]
			& Hom(P_{i-1}(a+1,b-1),M)\arrow[d, phantom, "\oplus"]\\
			Hom(D(a) \otimes P_{i-1}(1,b-1),M)\arrow{r} \urar[shorten >= 25pt,shorten <= 25pt]{}
			&Hom(D(a) \otimes P_i(1,b-1),M)
		\end{tikzcd}
	\end{center}  
		where the top horizontal map is $Hom(\Theta_{i-1}(a+1,b-1),M)$, the bottom one is $-Hom(1 \otimes  \Theta _i(1,b-1),M)$ and the restriction of the diagonal one on the summand $Hom(D(a,j,a_2,\dots,a_m),M)$  is $Hom(\triangle_{a,j} \otimes 1 \otimes \cdots \otimes 1), M) .$
	\end{rem}
We now consider weight spaces. For any $S(n,r)$-module $M$ and any sequence $a_1,\dots,a_m$ of nonnegative integers such that $a_1+\cdots +a_m=r$ and $m \le n$,  we identify the $\mathbb{Z}$-module $Hom(D(a_1,\dots,a_m),M)$ with the $(a_1,\dots,a_m)$ weight subspace of $M$ (with respect to the action of $\mathbb{Z}^n$) \cite{AB}. We will use such identifications freely throughout this paper.
	In particular, suppose  $M$ is a skew Weyl module for $S(n,r)$ (denoted be $K_{\lambda / \mu}$ in \cite{ABW}). Using the $\mathbb{Z}$-basis of $M$  given by standard tableaux of \cite{ABW}, Thm. II.3.16, we see that the $\mathbb{Z}$-module $Hom(D(a_1,\dots,a_m),M)$ may be identified with the $\mathbb{Z}$-submodule of $M$ that has basis the standard tableaux of $M$ that contain the entry $i$ exactly $a_i$ times, $i\in\{1,\dots,m\}$. We call this the standard basis of $Hom(D(a_1,\dots,a_m),M)$. If $a$ is a non negative integer, we denote by $M_a$ the $\mathbb{Z}$-submodule of $M$ given by 
	\[M_a=\sum Hom(D(a,a_2,\dots,a_m),M)\]
	where the sum ranges over all nonnegative integers $a_2,\dots,a_m$ such that $a_2+\cdots+a_m$ $=r-a$ and $m \le n$. A  $\mathbb{Z}$- basis of this is the set of standard tableaux of $M$ in which the entry 1 appears exactly $a$ times. $M_a$ is a $GL_{n-1}$ submodule of $M$ with $GL_{n-1}$ acting on the basis elements $2,\dots,n$. 
	
From the definition of $M_a$ it follows that we have the identification $$Hom_A(D(a_1,\dots,a_m),M)=Hom_B(D(a_2,\dots,a_m),M_{a_1})$$ and thus by summing,  $Hom_A(D(a) \otimes P_i(1,b-1),M)=Hom_B(P_i(1,b-1),M_{a}),$ where $a=a_1$, $A=S(n,r), B=S(n-1,b)$. We refer to the standard basis of $Hom_A(D(a_1,\dots,a_m),M)$ as the standard basis of $Hom_B(D(a_2,\dots,a_m),M_a)$. 
	
Under the above identifications it follows from the definition of the complexes, that the maps 
	\begin{align*}
	&Hom(1 \otimes  \Theta _i(1,b-1),M): \\
	&Hom_A(D(a) \otimes P_{i-1}(1,b-1),M) \rightarrow Hom_A(D(a) \otimes P_i(1,b-1),M), \\
	&Hom(\Theta _i(1,b-1),M_a): \\
	&Hom_B(P_{i-1}(1,b-1),M_a) \rightarrow Hom_B(P_i(1,b-1),M_a) \end{align*} 
	are equal. From now on we will drop from the above $Hom$ modules the subscripts $A,B$. 
	\subsection{Additional Preliminaries.}
	We want to consider matrices of the differential of the complex $Hom(P_*(a,b),M)$, where $M$ is a skew Weyl module.
	
	We consider the usual lexicographic ordering of the elements of the standard basis of $Hom(D(a_1,\dots,a_m),M)$ $=Hom(D(a_2,\dots,a_m),M_{a_1})$. We identify the sequence $(a_1,\dots,a_m)$ of nonnegative integers, where $m \le n$,  with the sequence  $(a_1,\dots,a_m,0,\dots,0) \in \mathbb{Z}^n $. Now if  $(a_1,\dots,a_m)$ is greater than   $(b_1,\dots,b_{m'})$ , where $m,m' \le n$, in the usual lexicographic ordering of sequences, me declare that each element of the standard basis of $Hom(D(a_1,\dots,a_m),M)$ is less than each element of the standard basis of $Hom(D(b_1,\dots,b_{m'}),M)$.

	For $M$ a skew Weyl module, let $E^{i}(\Delta(h),M)$ be the cokernel of the differential $Hom(\Theta_i (a,b),M)$ of the complex $Hom(P_{*}(a,b),M)$. We know that the torsion part of this abelian group is isomorphic to $Ext^{i}(\Delta(h),M)$ \cite{AB}. 

Let $e^{(i)}(a,b,M)$ be the matrix of the differential $Hom(\Theta_i(a,b),M)$ with respect to the standard bases for the various spaces $Hom(D(a_1,\dots,a_m),M)$ and the above ordering. Likewise, let $e^{(i)}(1,b-1,M_a)$ be the matrix of the differential $Hom(\Theta_i(1,b-1),M_a)$ with respect to the standard bases for the various $Hom(D(a_2,\dots,a_m),M_a)$. From the previous discussion and Remark 2.1 we get the following statement.
	\begin{lm} \label{canonicalmatrix}
		Suppose $n \ge b+1$ and $M$ is a skew Weyl module for $S(n,r)$. Then for $i=1$
		\[
		e^{(1)}(a,b,M)=
		\left(
		\begin{array}{c}
		B^1(a,b,M)\\
		\hline \;
		-e^{(1)}(1,b-1,M_a)
		\end{array}
		\right)
		\]
		and for $i>1$ 
		\[
		e^{(i)}(a,b,M)=
		\left(
		\begin{array}{c|c}
		e^{(i-1)}(a+1,b-1,M) &B^i(a,b,M)\\
		\hline \;
		& -e^{(i)}(1,b-1,M_a)
		\end{array}
		\right),
		\]
		where in both cases $B^i(a,b,M)$ is the matrix of  the diagonal map $Hom(D(a) \otimes P_{i-1}(1,b-1),M) \rightarrow Hom(P_{i-1}(a+1,b-1),M)$ of Remark 2.1.
	\end{lm}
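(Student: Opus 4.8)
The plan is to derive the block form directly from the direct-sum recursions for $P_*(a,b)$ together with the description of the differential in Remark 2.1, and then to reconcile the result with the orderings of the standard bases fixed in Subsection 2.4.

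First I would decompose the source and target of the map $Hom(\Theta_i(a,b),M)$. Since $\Theta_i(a,b)\colon P_i(a,b)\to P_{i-1}(a,b)$, this map runs from $Hom(P_{i-1}(a,b),M)$ to $Hom(P_i(a,b),M)$. For $i>0$ the recursion $P_i(a,b)=P_{i-1}(a+1,b-1)\oplus D(a)\otimes P_i(1,b-1)$ identifies the target with $Hom(P_{i-1}(a+1,b-1),M)\oplus Hom(D(a)\otimes P_i(1,b-1),M)$, and for $i>1$ the same recursion, applied with $i-1$ in place of $i$, identifies the source with $Hom(P_{i-2}(a+1,b-1),M)\oplus Hom(D(a)\otimes P_{i-1}(1,b-1),M)$. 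For $i=1$ the source is instead the single summand $Hom(P_0(a,b),M)=Hom(D(a)\otimes P_0(1,b-1),M)$, because $P_0(a,b)=D(a)\otimes P_0(1,b-1)$. The hypothesis $n\ge b+1$ is what makes the weight-space identifications of the preceding subsections available for all the tensor products that occur.

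Next I would read off the entries of the resulting block matrix from Remark 2.1, which presents $Hom(\Theta_i(a,b),M)$ relative to these decompositions: its component $Hom(P_{i-2}(a+1,b-1),M)\to Hom(P_{i-1}(a+1,b-1),M)$ is $Hom(\Theta_{i-1}(a+1,b-1),M)$, with matrix $e^{(i-1)}(a+1,b-1,M)$ by definition; its component $Hom(D(a)\otimes P_{i-1}(1,b-1),M)\to Hom(D(a)\otimes P_i(1,b-1),M)$ is $-Hom(1\otimes\Theta_i(1,b-1),M)$; its component $Hom(D(a)\otimes P_{i-1}(1,b-1),M)\to Hom(P_{i-1}(a+1,b-1),M)$ is the diagonal map whose matrix is, by definition, $B^i(a,b,M)$; and there is no component $Hom(P_{i-2}(a+1,b-1),M)\to Hom(D(a)\otimes P_i(1,b-1),M)$, so that block vanishes. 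By the identification established above, $Hom(1\otimes\Theta_i(1,b-1),M)$ agrees with $Hom(\Theta_i(1,b-1),M_a)$ under the passage $Hom(D(a)\otimes P_j(1,b-1),M)=Hom(P_j(1,b-1),M_a)$, so its matrix is $e^{(i)}(1,b-1,M_a)$ and the corresponding block of $e^{(i)}(a,b,M)$ is $-e^{(i)}(1,b-1,M_a)$. For $i=1$ there is no $P_{i-2}$-summand in the source, so only the diagonal and the bottom component survive, and one gets the stated single-column form.

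Finally I would check the placement of the blocks against the ordering of Subsection 2.4. The index sequences $(a_1,\dots)$ of the summands of $P_{i-1}(a+1,b-1)$ and of $P_{i-2}(a+1,b-1)$ satisfy $a_1\ge a+1$, whereas those of $D(a)\otimes P_i(1,b-1)$ and $D(a)\otimes P_{i-1}(1,b-1)$ satisfy $a_1=a$; hence every sequence of the first type is lexicographically larger than every sequence of the second type, and by the stated convention the standard basis vectors indexed by the $P_{i-1}(a+1,b-1)$-part precede those indexed by the $D(a)\otimes P_i(1,b-1)$-part, in both source and target. On each summand the ordering restricts to the one used to define $e^{(i-1)}(a+1,b-1,M)$ and $e^{(i)}(1,b-1,M_a)$ respectively, the fixed first coordinate $a$ being irrelevant to the lexicographic comparison in the latter case, so the matrix is block upper triangular in exactly the asserted form. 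The lemma is essentially bookkeeping, and I expect the only care needed to be in matching these orderings consistently and in handling the degenerate case $i=1$ separately; I anticipate no genuine obstacle.
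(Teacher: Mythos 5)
Your argument is correct and is exactly the derivation the paper intends: Lemma 2.2 is stated there without a separate proof, as an immediate consequence of Remark 2.1, the recursion $P_i(a,b)=P_{i-1}(a+1,b-1)\oplus D(a)\otimes P_i(1,b-1)$ (with $P_0(a,b)=D(a)\otimes P_0(1,b-1)$ handling $i=1$), the identification $Hom(D(a)\otimes P_j(1,b-1),M)=Hom(P_j(1,b-1),M_a)$, and the ordering convention of Subsection 2.4, all of which you spell out, including the check that weight sequences with $a_1\ge a+1$ precede those with $a_1=a$ and that the induced orderings on each summand agree with those defining $e^{(i-1)}(a+1,b-1,M)$ and $e^{(i)}(1,b-1,M_a)$. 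No gap; your write-up simply makes explicit the bookkeeping the paper leaves to the reader.
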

	We will also need a lemma for the special case $M=D_{a+k}\otimes\Lambda^{b-k}$ which we now describe. We will need a different order of the basis elements.
	
	Let $B_i$ be the standard basis of $Hom(P_{i-1}(a+1,b-1,D_{a+k}\otimes\Lambda^{b-k}))$, $B_{i,1}$ the subset of the standard basis of $Hom(D(a) \otimes P_{i}(1,b-1),D_{a+k}\otimes\Lambda^{b-k}))$ consisting of the standard tableaux with one 1 in the $\Lambda^{b-d}$ part and $B_{i,0}$ the subset of the standard basis of $Hom(D(a) \otimes P_{i}(1,b-1),D_{a+k}\otimes\Lambda^{b-k}))$ consisting of the standard tableaux with no 1 in the $\Lambda^{b-k}$ part. We order each of the sets $B_i, B_{i,1}, B_{i,0}$ lexicographically and we declare that the elements of $B_i$ are less than the elements of $B_{i,1}$, and the elements of $B_{i,1}$ are less than the elements of $B_{i,0.}$ Hence we have a total order on the basis $B_{i-1} \cup B_{i-1,1} \cup B_{i-1,0}$ of the domain of $Hom(\Theta_i(a,b),D_{a+d}\otimes\Lambda^{b-k})$ and a total order on the basis $B_{i} \cup B_{i,1} \cup B_{i,0}$ of the codomain of $Hom(\Theta_i(a,b),D_{a+k}\otimes\Lambda^{b-k})$.
	\begin{lm} \label{skewmatrix}
		With respect to the above ordered bases and for $i \ge2$,
		\[e^{(i)}(\Delta(h), D_{a+k}\otimes\Lambda^{b-k})=
		\left(
		\begin{array}{c|c|c}
		A &*&*\\
		\hline \;
		
		&B& \\
		\hline
		&&C
		\end{array}
		\right) ,
		\]
		where \begin{align*}
		A &=e^{(i-1)}(\Delta(a+1,1^{b-1})
		,D_{a+k}\otimes\Lambda^{b-k}),\\ B&=-e^{(i)}(\Delta(1,1^{b-1}),D_{1+k}\otimes\Lambda^{b-k-1}),\\ C&=-e^{(i)}(\Delta(1,1^{b-1}),D_{k}\otimes\Lambda^{b-k}). \end{align*}
	\end{lm}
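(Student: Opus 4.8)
The plan is to deduce the statement from Lemma~\ref{canonicalmatrix} by unwinding the structure of $M_a$ in the special case $M=D_{a+k}\otimes\Lambda^{b-k}$. Note first that $D_{a+k}\otimes\Lambda^{b-k}$ is a skew Weyl module for $S(n,a+b)$ (the skew shape being a row of length $a+k$ disjoint from a column of length $b-k$), and likewise $D_{1+k}\otimes\Lambda^{b-k-1}$ and $D_k\otimes\Lambda^{b-k}$ are skew Weyl modules for $S(n-1,b)$; so the formalism of Section~2 applies to all of them, and under the standing hypothesis $n\ge b+1$ we may invoke Lemma~\ref{canonicalmatrix}. Since $i\ge 2$, it gives
\[ e^{(i)}(a,b,M)=\left(\begin{array}{c|c} e^{(i-1)}(a+1,b-1,M) & B^i(a,b,M)\\ \hline & -e^{(i)}(1,b-1,M_a)\end{array}\right), \]
where the first block row and column correspond to the summand $P_{i-1}(a+1,b-1)$ of $P_i(a,b)=P_{i-1}(a+1,b-1)\oplus D(a)\otimes P_i(1,b-1)$ and the second to the summand $D(a)\otimes P_i(1,b-1)$, using the identification $Hom(D(a)\otimes P_\ell(1,b-1),M)=Hom(P_\ell(1,b-1),M_a)$. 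As $P_*(a+1,b-1)$ is a projective resolution of $\Delta(a+1,1^{b-1})$, the top left block equals $A$, so it remains to refine the second block row and column into the parts indexed by $B_{\ell,1}$ and $B_{\ell,0}$.

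To do this, I would analyze $M_a$ via the Hopf algebra decompositions $DF=D(e_1)\otimes DF'$ and $\Lambda F=\Lambda(e_1)\otimes\Lambda F'$, where $F'$ has basis $e_2,\dots,e_n$. Since $e_1$ occurs at most once in an exterior monomial, $M_a$ decomposes as a direct sum of $GL_{n-1}$-submodules $M_a=M_a^{(1)}\oplus M_a^{(0)}$, where $M_a^{(1)}$ (resp.\ $M_a^{(0)}$) collects the standard tableaux in which $e_1$ does (resp.\ does not) occur in the $\Lambda^{b-k}$ factor. Extracting the spectator factors, namely $e_1^{(a-1)}$ from the divided-power part and $e_1\wedge(-)$ from the exterior part in the first case, and $e_1^{(a)}$ in the second, one obtains isomorphisms of $S(n-1,b)$-modules $M_a^{(1)}\cong D_{1+k}\otimes\Lambda^{b-k-1}$ and $M_a^{(0)}\cong D_k\otimes\Lambda^{b-k}$ which send standard tableaux to standard tableaux and preserve the lexicographic orderings. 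Because $Hom(-,-)$ is additive in its second argument and the differential $Hom(\Theta_*(1,b-1),-)$ is natural in that argument, the complex $Hom(P_*(1,b-1),M_a)$ is the direct sum of $Hom(P_*(1,b-1),M_a^{(1)})$ and $Hom(P_*(1,b-1),M_a^{(0)})$. Hence, with $B_{\ell,1}$ listed before $B_{\ell,0}$, the matrix $-e^{(i)}(1,b-1,M_a)$ is block diagonal with diagonal blocks $B=-e^{(i)}(\Delta(1,1^{b-1}),D_{1+k}\otimes\Lambda^{b-k-1})$ and $C=-e^{(i)}(\Delta(1,1^{b-1}),D_k\otimes\Lambda^{b-k})$.

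Finally I would assemble the blocks. The domain $Hom(D(a)\otimes P_{i-1}(1,b-1),M)=Hom(P_{i-1}(1,b-1),M_a)$ of the diagonal map $B^i(a,b,M)$ splits as above, so $B^i(a,b,M)$ contributes the two blocks marked $*$ in the top row; there is no component of the differential from $Hom(P_{i-2}(a+1,b-1),M)$ to $Hom(D(a)\otimes P_i(1,b-1),M)$ by Remark~2.1, so the two lower blocks in the first column vanish, and the off-diagonal blocks of $-e^{(i)}(1,b-1,M_a)$ vanish by the direct sum decomposition. Ordering the basis of the domain as $B_{i-1}<B_{i-1,1}<B_{i-1,0}$ and that of the codomain as $B_i<B_{i,1}<B_{i,0}$, we recover precisely the displayed $3\times 3$ matrix. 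The main obstacle is the middle step: one must check carefully that the isomorphisms $M_a^{(1)}\cong D_{1+k}\otimes\Lambda^{b-k-1}$ and $M_a^{(0)}\cong D_k\otimes\Lambda^{b-k}$ are compatible with the standard tableau bases and the lexicographic orderings used to write the matrices, so that the blocks $-e^{(i)}(1,b-1,M_a^{(1)})$ and $-e^{(i)}(1,b-1,M_a^{(0)})$ may legitimately be replaced by $B$ and $C$; once this is granted, the rest is a direct unwinding of Lemma~\ref{canonicalmatrix} and Remark~2.1.
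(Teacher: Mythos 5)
Your proposal is correct, and it follows the paper's skeleton (unwinding Lemma~\ref{canonicalmatrix} to get $A$ in the top left, the zero blocks in the first column from Remark 2.1, and $-e^{(i)}(1,b-1,M_a)$ in the lower right corner), but it handles the key step differently. The paper identifies the blocks $B$ and $C$ by writing down the shift maps $f_{i-1},f_i$ on the spans of $B_{\ell,1}$ (resp.\ $B_{\ell,0}$) as maps of abelian groups and verifying by hand, on standard tableaux, that $f_i\circ\theta_s=\theta_{s-1}\circ f_{i-1}$ for $s\ge 2$, with a separate direct check that the block from $B_{i-1,0}$ to $B_{i,1}$ vanishes (applying $\theta_s$, $s\ge 2$, to a tableau with no $1$ in the exterior part never creates one). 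You instead decompose the coefficient module, $M_a=M_a^{(1)}\oplus M_a^{(0)}\cong D_{1+k}\otimes\Lambda^{b-k-1}\oplus D_k\otimes\Lambda^{b-k}$ as a $GL_{n-1}$-module, and use additivity and naturality of $Hom(P_*(1,b-1),-)$ in the coefficients; this yields the block-diagonality of the corner (including the vanishing mixed blocks) and the identification of the two diagonal blocks simultaneously, which is conceptually cleaner. What your route buys is that no computation with the operators $\theta_s$ is needed; what it costs is exactly the point you flag at the end, namely checking that the strip-the-spectator-$e_1$-and-relabel isomorphisms are $S(n-1,b)$-equivariant (after the basis identification $e_{j}\mapsto e_{j-1}$), send standard tableaux to standard tableaux, and preserve the orderings of Subsection 2.4 -- these verifications are routine, and they are in substance the same maps $f_\ell$ the paper uses, just packaged as module isomorphisms rather than as maps of weight spaces checked to commute with each $\theta_s$.
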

	\begin{proof}The claim for the matrix $A$ follows from Lemma 2.2. For $B$ consider the diagram of abelian groups
	\begin{center}
	\begin{tikzcd}
		spanB_{i-1,1} \arrow{r}{a} \arrow[d, "f_{i-1}"]
	&spanB_{i,1}\arrow[d, "f_i"]\\
Hom(P_{i-1}(1,b-1),D_{1+k}\otimes\Lambda^{b-k-1})	\arrow{r}{\beta}
	&Hom(P_{i}(1,b-1),D_{1+k}\otimes\Lambda^{b-k-1})
	\end{tikzcd}
\end{center} 
where we regard the modules in the bottom row as $GL_{n-1}$ modules  with $GL_{n-1}$ acting on the basis elements $1,\dots,n-1$. Here,  $f_{i-1}$ is the isomorphism of abelian groups sending each standard basis element $1^{(a-1)}i_2^{(c_2)}\cdots i_q^{(c_q)}\otimes 1j_2\cdots j_{b-k} \in B_{i-1,1}$ to the standard basis element $(i_2-1)^{(c_2)}\cdots (i_q-1)^{(c_q)}\otimes (j_2-1)\cdots (j_{b-k}-1) \in Hom(P_{i-1}(1,b-1),D_{1+k}\otimes\Lambda^{b-k-1})$ and similarly for $f_i$, $\alpha$ is the restriction of $Hom(\Theta_i(a,b),D_{a+k}\otimes\Lambda^{b-k})$ to $spanB_{i-1,1}$ and $\beta = Hom(\Theta_i(1,b-1),D_{1+k}\otimes\Lambda^{b-k-1})$.We claim that the diagram commutes. Using Remark 2.1 and Remark 2.2 it suffices to show that $f_i\circ \theta_s = \theta_{s-1} \circ f_{i-1}$, $t \ge 2$. Indeed, if\[ T=1^{(a-1)}i_2^{(c_2)}\cdots i_q^{(c_q)}\otimes 1j_2\cdots j_{b-k} \in B_{i-1,1},
	\]then \[ \theta_s(T) = 1^{(a-1)}i_2^{(c_2)}\cdots i_u^{(c_u)}{i^{'}_{u+1}}^{(c_{u+1})}\cdots {i^{'}_{q}}^{(c_{q})}\otimes 1j_2\cdots j_vj^{'}_{v+1}\cdots j^{'}_{b-k},
	\]
	where $i_u=max(\{i_2,\dots,i_q\} \cap (\{2,\dots,s\})$, $j_v=max(\{j_2,\dots,i_{b-k}\} \cap (\{2,\dots,s\})$, 
	and $i^{'}=i-1$. We note that $\theta_s(T)$ is either 0 (if $j_v=j^{'}_{v+1}$) or a standard basis element (if $j_v \neq j^{'}_{v+1}$ and $i_u \neq i^{'}_{u+1}$ ) or a non-zero multiple of a standard basis element (if $j_v \neq j^{'}_{v+1}$ and $i_u = i^{'}_{u+1}$). Hence, in all cases we have
	\begin{align*}
	 f_i \circ \theta_s(T) =&  (i_2-1)^{(c_2)}\cdots(i_u-1)^{(c_u)}({i^{'}_{u+1}-1})^{(c_{u+1})}\cdots({i^{'}_{q}-1})^{(c_{q})}\otimes (j_2-1)\\
	 &\cdots(j_v-1)(j^{'}_{v+1}-1)\cdots(j^{'}_{b-k}-1).
	\end{align*}
	Similarly, one verifies that 
	\begin{align*}
	\theta_{s-1} \circ f_{i-1}  (T) =&  (i_2-1)^{(c_2)}\cdots(i_u-1)^{(c_u)}({i^{'}_{u+1}-1})^{(c_{u+1})}\cdots({i^{'}_{q}-1})^{(c_{q})}\otimes (j_2-1)\\
    &\cdots(j_v-1)(j^{'}_{v+1}-1)\cdots(j^{'}_{b-k}-1)
	\end{align*}
and thus the diagram commutes. The proof for $C$ is similar and thus omitted. Finally, the middle block in the right column block is indeed 0, since for every $T \in B_{i-1,0}$, and every $s \ge 2$ we have $\theta_s(T) $ is of the form $x\otimes j_1\dots j_{b-k}$ with $1 \notin \{ j_1,\dots,j_{b-k} \}$. \end{proof}
\noindent \textit{Remark.} We will apply many times the isomorphism $f_2$ of the previous proof in  section 4 in order to describe specific relations of $E^{2}(\Delta(h),D_{a+k}\otimes \Lambda^{b-k})$.

The next lemma will be used several times. The first equality follows from the main result of \cite{Ku1}.
	\begin{lm} \label{basiclemma}
		Suppose $n \ge {b+1}$ and $0 \le k < b$. Then $$Ext^i(\Delta (h),D_{a+k} \otimes \Lambda ^{b-k})=Ext^i(\Lambda ^{k+1}, D_{k+1}).$$ In particular,
		$\;$ \begin{enumerate}
		\item[{\rm (a)}] $Ext^1(\Delta (h),D_{a+k} \otimes \Lambda ^{b-k})= \mathbb{Z}_2, \;  k \ge 1,$  
		\item[{\rm (b)}] $Ext^2(\Delta (h),D_{a+k} \otimes \Lambda ^{b-k})= \begin{cases}\mathbb{Z}_3, \;  k=2,3   \\ 0, \; \;\; k \ne 2,3.\end{cases}$ \end{enumerate}
	\end{lm}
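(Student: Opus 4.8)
The plan is to handle the two assertions of the lemma in turn. The displayed equality $Ext^i(\Delta(h),D_{a+k}\otimes\Lambda^{b-k})=Ext^i(\Lambda^{k+1},D_{k+1})$ I would derive directly from the main theorem of \cite{Ku1} --- a row- and column-removal principle for the relevant $Ext$ groups --- the hypothesis $n\ge b+1$ supplying the variables it needs. After this reduction the remaining task is to compute the first two integral $Ext$ groups between the one-column Weyl module $\Lambda^{k+1}=\Delta(1^{k+1})$ and the one-row Weyl module $D_{k+1}=\Delta((k+1))$, both of degree $k+1$: the setting of \cite{A}, now over $\mathbb{Z}$.

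For this I would apply $Hom(-,D_{k+1})$ to the finite projective resolution $P_*(1,k)$ of $\Lambda^{k+1}=\Delta(1,1^k)$ recalled in Section 2.3. Since every weight space of $D_{k+1}$ is free of rank one, the identification of weight spaces with $Hom$-modules turns the resulting complex $C^\bullet$ into a complex of finitely generated free abelian groups, the $i$-th term of rank equal to the number of compositions of $k+1$ into $k+1-i$ positive parts with first part $\le i+1$; this last restriction is in fact automatic, so
\[
C^\bullet:\qquad \mathbb{Z}=\mathbb{Z}^{\binom{k}{0}}\xrightarrow{\,d^0\,}\mathbb{Z}^{\binom{k}{1}}\xrightarrow{\,d^1\,}\mathbb{Z}^{\binom{k}{2}}\xrightarrow{\,d^2\,}\cdots\longrightarrow\mathbb{Z}^{\binom{k}{k}},
\]
and $Ext^i(\Lambda^{k+1},D_{k+1})=H^i(C^\bullet)$. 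By Remark 2.1 and the explicit form of the maps $\triangle_{u,v}$, every entry of every $d^i$ is, up to sign, a product of binomial coefficients. I would also record that $C^\bullet\otimes\mathbb{Q}$ is exact, since $S_\mathbb{Q}(n,k+1)$ is semisimple and $\Lambda^{k+1}\otimes\mathbb{Q}$ and $D_{k+1}\otimes\mathbb{Q}$ are non-isomorphic irreducibles; hence each $H^i(C^\bullet)$ with $i\ge 1$ is finite.

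Part (a) I expect to be clean. The differential $d^0$ is read off from $\Theta_1(1,k)$: splitting the single entry $2$ of a summand of $P_1(1,k)$ and composing with the multiplication into $D_{k+1}$ contributes exactly the factor $\binom{2}{1}=2$, so $d^0=2v$ with $v=(1,-1,1,-1,\dots)$ primitive. Then $H^0(C^\bullet)=\ker d^0=0$, and since $\ker d^1$ has rank $1$ (by exactness over $\mathbb{Q}$) and contains the primitive vector $v$, it equals $\mathbb{Z}v$; hence $H^1(C^\bullet)=\mathbb{Z}v/2\mathbb{Z}v\cong\mathbb{Z}_2$ for every $k\ge 1$.

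For part (b), $\ker d^2$ and $\operatorname{im}d^1$ have equal rank (exactness over $\mathbb{Q}$), so $H^2(C^\bullet)$ is finite of order $[\ker d^2:\operatorname{im}d^1]$, and it suffices to show this index is $3$ when $k\in\{2,3\}$ and $1$ otherwise --- an order $3$ then forces $\mathbb{Z}_3$ and an order $1$ forces $0$. For small $k$ this is a short Smith-normal-form computation (for instance when $k=2$ the complex is $\mathbb{Z}\to\mathbb{Z}^{2}\to\mathbb{Z}$ with $d^1=(3,\,3)$, giving $H^2(C^\bullet)=\mathbb{Z}_3$, the $3$ being $\binom{3}{1}$). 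The general $k$ is the main obstacle: one must follow the binomial-coefficient entries of $d^1$ and $d^2$ and show that the $3$'s appearing in $d^1$ survive as an elementary divisor precisely when $k\le 3$. I would try to do this by induction on $k$, using the recursion $P_i(1,k)=P_{i-1}(2,k-1)\oplus D_1\otimes P_i(1,k-1)$ of Section 2.3, the block-triangular form of $Hom(\Theta_i(1,k),D_{k+1})$ given by Remark 2.1 and Lemma 2.2, and the identification $(D_{k+1})_1\cong D_k$, which recycles the summand $Hom(D_1\otimes P_i(1,k-1),D_{k+1})$ into a copy of the complex $C^\bullet$ for $k-1$; the connecting maps coming from the off-diagonal blocks are the subtle point. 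As a cross-check, one can instead recover the orders --- and the primes $2$ and $3$ that occur --- from the modular computations of \cite{A} over all primes $p$ together with the universal coefficient theorem, which is available here because $\Lambda^{k+1}$ is $\mathbb{Z}$-free with a finite $\mathbb{Z}$-projective resolution and all the groups in question are rationally trivial. In any case the heart of the matter is purely arithmetic: deciding which binomial coefficients persist in the Smith normal form of the middle differentials.
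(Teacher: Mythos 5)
Your reduction to $Ext^i(\Lambda^{k+1},D_{k+1})$ via \cite{Ku1}, the identification of $Hom(P_*(1,k),D_{k+1})$ with a complex of free abelian groups of ranks $\binom{k}{i}$ whose entries are (signed) binomial coefficients, and your proof of part (a) (the differential $d^0$ is twice a primitive vector, and $\ker d^1$ is a rank-one pure subgroup containing that vector) all match the paper's setup and are correct. The problem is part (b): for $k\ge 4$ you never actually prove the vanishing. You reduce it to ``show the $3$'s in $d^1$ do not survive in the Smith normal form of $d^1,d^2$ for $k\ge 4$'' and then say you \emph{would try} an induction on $k$ through the recursion of Lemma 2.2, conceding that ``the connecting maps coming from the off-diagonal blocks are the subtle point.'' That is precisely the heart of the lemma, and an induction on the lower-right block alone cannot work as stated: the Smith normal form of a block-triangular matrix is not determined by the Smith normal forms of its blocks, so knowing that $e^{(2)}(1,k-1,D_k)$ has trivial torsion does not by itself control the torsion of $e^{(2)}(1,k,D_{k+1})$ once the column $e^{(1)}(2,k-1,D_{k+1})=(3,-2,2,\dots)^t$ and the block $\mathrm{diag}(3,2,\dots,2)$ are adjoined. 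The paper closes exactly this gap by a concrete argument: the rank of $e^{(2)}(1,k,D_{k+1})$ is at most $k-1$ (because its product with the nonzero matrix $e^{(1)}(1,k,D_{k+1})$ vanishes), and one exhibits two explicit $(k-1)\times(k-1)$ minors equal to $\pm 2^{k-1}$ and (by an induction riding on the recursion) $\pm 3^{k-1}$; two coprime minors of size equal to the rank force every nonzero invariant factor to be $1$, hence the torsion of the cokernel, which is $Ext^2$, is zero. Some such explicit certificate (or simply invoking Touz\'e's computation \cite{To}, Ex.\ 11.9, as the paper also does) is needed; your proposal stops short of it.

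A secondary point: your fallback via \cite{A} and the universal coefficient theorem cannot substitute for the missing step. Knowing $\dim_K Ext^i_{S_K}(\Lambda^{k+1},D_{k+1})$ for all primes $p$ determines, through the UCT, only the number of cyclic $p$-summands of the integral groups, not their orders; it would not distinguish $\mathbb{Z}_3$ from $\mathbb{Z}_9$, so even as a ``cross-check'' it does not pin down the statement of the lemma. For $k=2$ (and $k=3$) your direct Smith-normal-form computations are fine and agree with the paper's matrices $e^{(2)}(1,2,D_3)=(3\;\;3)$ and $e^{(2)}(1,3,D_4)$.
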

	\begin{proof} The first statement and (1) are Lemma 2.3 of \cite{MS}. In \cite{To}, Ex. 11.9, $Ext^2(\Lambda ^{k+1}, D_{k+1})$ was determined and hence we have (2). We give below a different proof of Touz\'e's result for the sake of self completeness. 
	
We calculate $Ext^2(\Lambda ^{k+1}, D_{k+1})$ using the projective resolution $P_*(1,k)$ of $\Lambda ^{k+1}$ and the method of \cite{AB}, Section 9. If $k=0,1$, the length $P_*(1,k)$ is less than 2 and hence $Ext^2(\Lambda ^{k+1}, D_{k+1})=0$. Suppose $k \ge 2$. According to Remark 2.1, the map $Hom( \Theta_2(1,k),D_{k+1})$ looks like:
	\begin{center}
	\begin{tikzcd}
	Hom(P_0(2,k-1),D_{k+1}) \arrow{r}  \arrow[d, phantom, "\oplus"]
	& Hom(P_1(2,k-1),D_{k+1})\arrow[d, phantom, "\oplus"]\\
	Hom(D(1) \otimes P_1(1,k-1),D_{k+1})\arrow{r} \urar[shorten >= 25pt,shorten <= 25pt]{}
	&Hom(D(1) \otimes P_2(1,k-1),D_{k+1})
	\end{tikzcd}
\end{center} 
	If $a_1,\dots,a_m$ are positive integers such that $a_1+\cdots+a_m=k+1,$ then we have $Hom(D(a_1,\dots,a_m),D_{k+1})= \mathbb{Z}$, 
	a generator is simply the multiplication map, which will be denoted by $1^{(a_1)}\cdots m^{(a_m)}$. Thus,  the map $$Hom(D(a_1,\dots,u,v,\dots,a_m),D_{k+1}) \rightarrow Hom(D(a_1,\dots,a_t,\dots,a_m),D_{k+1}),$$ where $u+v=a_t$, induced by $1 \otimes \cdots \otimes \triangle_{u,v} \otimes \cdots \otimes 1: D(a_1,\dots,a_t,\dots,a_m) \rightarrow D(a_1,\dots,u,v,\dots,a_m)$ is multiplication by the binomial coefficient $ \binom{a_t}{u}$.

	We have the following ordered $\mathbb{Z}$ - bases. 
	\begin{itemize}\item $Hom(P_0(2,k-1),D_{k+1}) : 1^{(2)}2\cdots k,$
	\item $Hom(D(1) \otimes P_1(1,k-1),D_{k+1}) :12^{(2)}3\cdots k, 123^{(2)}\cdots k, \dots,12\cdots k^{(2)},$
	\item $Hom(P_1(2,k-1),D_{k+1}) : 1^{(3)}2 \cdots (k-1), 1^{(2)}2^{(2)}4 \cdots (k-1),$\\
	$1^{(2)}23^{(2)}4 \cdots (k-1),\dots ,1^{(2)}23\cdots (k-1)^{(2)}.$
	\item $Hom(D(1) \otimes P_2(1,k-1),D_{k+1}) : 12^{(3)}3 \cdots (k-1), 12^{(2)}3^{(2)}4 \cdots (k-1),$\\
	$\dots, 12^{(2)}\cdots (k-1)^{(2)}, \dots ,123^{(3)}\cdots (k-1), 123^{(2)}4^{(2)}\cdots (k-1),$\\
	$\dots,123^{(2)}\cdots (k-1)^{(2)}, \dots , 12\cdots (k-1)^{(3)}.$
	\end{itemize}
	Now with the above ordered bases, a quick computation shows that the matrix of the top horizontal arrow of $Hom( \Theta_2(1,k),D_{k+1})$ is the $(k-1) \times 1$ matrix $$e^{(1)}(2,k-1,D_{k+1}) = (3 \; \; -2 \; \; \; 2 \; \; \; \dots \; \; (-1)^{k-2}2)^t$$ and the matrix of the diagonal arrow is the $(k-1) \times (k-1)$ matrix $$B^2(1,k,D_{k+1})=diag(3,2,\dots,2).$$ Also, it follows that the matrix  $e^{(2)}(1,k,D_{k+1})$ is of size $ \binom{k}{2} \times k$. In the notation of Lemma 2.2, we have $M=D_{k+1}$ and thus $M_1=D_k$. Therefore the recursion for $e^{(2)}(1,k,D_{k+1})$ is
	\[
	e^{(2)}(1,k,D_{k+1})=
	\left(
	\begin{array}{c|c}
	e^{(1)}(2,k-1,D_{k+1})  &B^2(1,k,D_{k+1})\\
	\hline \;
	& -e^{(2)}(1,k-1,D_k)
	\end{array}
	\right)
	\]
	where $k \ge 2$ and $e^{(2)}(1,2,D_3)=(3 \; \; 3)$ (this follows by an immediate computation). Hence, \[e^{(2)}(1,3,D_4)=  \begin{pmatrix}
	3 &3 & \\
	-2 & & 2 \\
	&-3 &-3
	\end{pmatrix}.\] The nonzero invariant factors of these last two matrices are 3 and 1,3 respectively and therefore $Ext^2( \Lambda ^3,D_3)=Ext^2( \Lambda ^4,D_4)= \mathbb{Z}_3$.
	
	Let $k \ge 4$. In order to show that $Ext^2(\Lambda ^{k+1}, D_{k+1})=0$, it suffices to show that the last nonzero invariant factor of $e^{(2)}(1,k,D_{k+1})$ is equal to 1. This matrix has rank at most $k-1$ since the matrix product $e^{(2)}(1,k,D_{k+1}) \cdot e^{(1)}(1,k,D_{k+1}) $ is zero, due to the fact that $Hom(P_*(1,k),D_{k+1})$ is a complex. (We have $e^{(1)}(1,k,D_{k+1}) \ne 0 $ since $Ext^1(\Lambda^{k+1},D_{k+1}) \ne 0$. In fact here  $e^{(1)}(1,k,D_{k+1})$ is the $k \times 1$ matrix $(2 \; \; -2 \; \; \; 2 \; \; \; \cdots \; \; (-1)^{k-1}2)^t)$.  It is straightforward to verify that the minor of $e^{(2)}(1,k,D_{k+1})$ corresponding to columns $2,3,\dots,k$ and rows $2,3,\dots,k-1, k+1$ is equal to $\pm 2^{k-1}$ and an immediate induction shows that the minor corresponding to columns $1,2,\dots,k-1$ and rows $1, 1+(k-1), 1+(k-1)+(k-2),\dots, \binom{k}{2}$ is equal to $\pm 3^{k-1}$. Since there exist two relatively prime minors of size the rank of the matrix $e^{(2)}(1,k,D_{k+1})$, the last nonzero invariant factor of this matrix is equal to 1. 
	\end{proof}

	Next we describe the differential of $Hom(\Theta_i (a,b),M)$ for $M$ a skew Weyl module. For $T \in Hom(D(a_1,\dots,a_m),M)$ a standard basis element, let $\theta_s (T)$, $1 \le s \le m$, be the element of $Hom(D(a_1,\dots,a_s+a_{s+1},\dots,a_m),M)$ obtained from $T$ by replacing each occurrence of $j>s$ by $j-1$. If $s>m$, let $\theta_s (T)=0$. By extending linearly, we obtain for each degree $i$ a map of $\mathbb{Z}$-modules $Hom(P_i(a,b),M) \rightarrow Hom(P_{i+1}(a,b),M)$ which is denoted by $\Theta_i$. It is clear that only a finite number of these maps are nonzero. From the definition of the differential of $P_*(a,b)$,	with the previous notation, we obtain the following fact.
	\begin{rem}
	 $Hom(\Theta_i(a,b),M) = \sum_{s \ge 1}(-1)^{s-1}\theta_s.$
	\end{rem}

    \section{The cases $k=3$ and $k\ge 5$}
	It is well known, for example by \cite{Ja}, B.3 (4), and \cite{Ma} respectively, that  $Ext^2(\Delta(h),\Delta(h))=$ $Ext^2(\Delta(h),\Delta(h(1)))=0$. Theorem 4.1  of \cite{MS} implies that $Ext^2(\Delta(h),h(2)))=\mathbb{Z}_{(a+b)/gcd(2,a+b)}$. 
	
	In this Section we will determine the extension groups $Ext^2(\Delta(h),\Delta(h(k)))$ for $k=3$ and $k\geq 5$. We will use certain exact sequences that we now describe.

Consider the short exact sequence
	\begin{equation} \label{s1}
	0\rightarrow\Delta(h(k+1))\xrightarrow{i_k} D_{a+k}\otimes \Lambda^{a-k}\xrightarrow{\pi_k} \Delta(h(k))\rightarrow 0
	\end{equation}
	for every $k\in\mathbb{Z}_{>0}$, where $\pi_k$ is induced by the identity map on generators and $i_k$ is induced by the composition $$D_{a+k+1}\otimes \Lambda^{b-1} \to D_{a+k}\otimes D_1 \otimes \Lambda^{b-1} \to D_{a+k}\otimes \Lambda^{b}$$ of comultiplication in $ D_* $ and multiplication in  $ \Lambda^*$.
	
	We have $Hom_{S_{\mathbb{Q}}(n,r)}(\Delta(h),\Delta(h(k)))=0$, since $\Delta(h)$ and $\Delta(h(k))$ are distinct irreducible representations of $S_{\mathbb{Q}}(n,r)$. Thus, $Hom(\Delta(h),\Delta(h(k)))=0$. Applying then $Hom(\Delta(h),-)$ to (\ref{s1}) we obtain the exact sequence
	\begin{align}\label{long1}
	0&\longrightarrow Ext^1(\Delta(h),\Delta(h(k+1)))\xrightarrow{i_k^{(1)}}
	Ext^1(\Delta(h),D_{a+k}\otimes\Lambda^{b-k})\nonumber \\
	&\xrightarrow{\pi_k^{(1)}}Ext^1(\Delta(h),\Delta(h(k))) \longrightarrow Ext^2(\Delta(h),\Delta(h(k+1)))\nonumber\\
	&\xrightarrow{i_k^{(2)}}Ext^2(\Delta(h),D_{a+k}\otimes\Lambda^{b-k})
	\xrightarrow{\pi_k^{(2)}} Ext^2(\Delta(h),\Delta(h(k))) \nonumber \\
	&\longrightarrow Ext^3(\Delta(h),\Delta(h(k+1)))\xrightarrow{i_k^{(3)}}Ext^3(\Delta(h),D_{a+k}\otimes\Lambda^{b-k})
	\end{align}
	Using Theorem 3.5 of \cite{MS} and Lemma \ref{basiclemma}, we have the following remark.
	\begin{rem} $\;$\begin{enumerate}
	\item[{\rm (a)}] 	$Ext^1(\Delta(h),\Delta(h(k+1)))=	Ext^1(\Delta(h),D_{a+k}\otimes\Lambda^{b-k})=\mathbb{Z}_2$ 
	and \\
	 $Ext^1(\Delta(h),\Delta(h(k)))=0$, or
	\item[{\rm (b)}] $Ext^1(\Delta(h),D_{a+k}\otimes\Lambda^{b-k})=Ext^1(\Delta(h),\Delta(h(k)))=\mathbb{Z}_2$ 
	and \\
	$Ext^1(\Delta(h),\Delta(h(k+1)))=0$.
\end{enumerate}
 \end{rem}
    Thus (\ref{long1}) takes the following form 
	\begin{align}\label{long2}
	&0\rightarrow Ext^2(\Delta(h),\Delta(h(k+1)))\xrightarrow{i_k^{(2)}}Ext^2(\Delta(h),D_{a+k}\otimes\Lambda^{b-k})\nonumber \\
	&\xrightarrow{\pi_k^{(2)}}Ext^2(\Delta(h),\Delta(h(k)))\longrightarrow Ext^3(\Delta(h),\Delta(h(k+1)))\nonumber \\
	&\xrightarrow{i_k^{(3)}} Ext^3(\Delta(h),D_{a+k}\otimes\Lambda^{b-k})
	\end{align}
	
	Then, Theorem 4.1 by \cite{MS} and (3) yield the case $k=3$ of the theorem.
	\begin{prop}
	We have	$Ext^2(\Delta(h),\Delta(h(3)))=\mathbb{Z}_{3/gcd(3,a+b)}.$
	\end{prop}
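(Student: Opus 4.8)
The plan is to extract $Ext^2(\Delta(h),\Delta(h(3)))$ from the exact sequence \eqref{long2} with $k=3$. Setting $k=3$ there, we obtain
\begin{align*}
0&\rightarrow Ext^2(\Delta(h),\Delta(h(4)))\xrightarrow{i_3^{(2)}}Ext^2(\Delta(h),D_{a+3}\otimes\Lambda^{b-3})\\
&\xrightarrow{\pi_3^{(2)}}Ext^2(\Delta(h),\Delta(h(3)))\longrightarrow Ext^3(\Delta(h),\Delta(h(4)))\xrightarrow{i_3^{(3)}} Ext^3(\Delta(h),D_{a+3}\otimes\Lambda^{b-3}).
\end{align*}
By Lemma \ref{basiclemma}(2), the middle term $Ext^2(\Delta(h),D_{a+3}\otimes\Lambda^{b-3})=\mathbb{Z}_3$ (using $k=3$). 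So $Ext^2(\Delta(h),\Delta(h(3)))$ sits in an exact sequence sandwiched between a subgroup of $\mathbb{Z}_3$ (namely the cokernel of $i_3^{(2)}$) and a subgroup of $Ext^3(\Delta(h),\Delta(h(4)))$.

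First I would pin down $Ext^3(\Delta(h),\Delta(h(4)))$, or at least the relevant kernel of $i_3^{(3)}$. The natural route is to invoke Theorem 4.1 (the case $k=4$, already asserted in this excerpt but proved in Section 4) together with Lemma \ref{basiclemma}(2), which gives $Ext^2(\Delta(h),D_{a+4}\otimes\Lambda^{b-4})=0$ and $Ext^3(\Delta(h),D_{a+4}\otimes\Lambda^{b-4})=0$; feeding these into the long exact sequence \eqref{long1} with $k=4$ forces $Ext^3(\Delta(h),\Delta(h(5)))\cong Ext^3(\Delta(h),\Delta(h(4)))$ and, more importantly, shows $Ext^3(\Delta(h),\Delta(h(4)))$ injects into $Ext^3(\Delta(h),D_{a+4}\otimes\Lambda^{b-4})=0$ once one knows the higher extension groups eventually vanish (which they do since $P_*(a,b)$ has finite length). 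Hence $Ext^3(\Delta(h),\Delta(h(4)))=0$, and the map $\pi_3^{(2)}$ is surjective. Therefore $Ext^2(\Delta(h),\Delta(h(3)))\cong \mathrm{cok}(i_3^{(2)})$, a quotient of $\mathbb{Z}_3$.

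It then remains to compute the image of $i_3^{(2)}$, equivalently whether the injection $Ext^2(\Delta(h),\Delta(h(4)))\hookrightarrow \mathbb{Z}_3$ is zero or an isomorphism. By Theorem 4.1 (case $k=4$), $Ext^2(\Delta(h),\Delta(h(4)))=\mathbb{Z}_t$ with $t=\gcd(3,a+b)$. If $t=1$ the map $i_3^{(2)}$ is zero and $Ext^2(\Delta(h),\Delta(h(3)))=\mathbb{Z}_3=\mathbb{Z}_{3/t}$; if $t=3$ then $Ext^2(\Delta(h),\Delta(h(4)))=\mathbb{Z}_3$ injects into $\mathbb{Z}_3$, hence is an isomorphism, so $\mathrm{cok}(i_3^{(2)})=0=\mathbb{Z}_{3/t}$. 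In both cases $Ext^2(\Delta(h),\Delta(h(3)))=\mathbb{Z}_{3/\gcd(3,a+b)}$, as claimed. The only genuine input beyond the exact sequence bookkeeping is the value of $Ext^2(\Delta(h),\Delta(h(4)))$ from Theorem 4.1; since the present Proposition is stated before Section 4, I would either cite the forthcoming computation or, to keep the argument self-contained here, observe that $Ext^2(\Delta(h),\Delta(h(3)))$ is a quotient of $\mathbb{Z}_3$ and defer the precise identification of the order to the Section 4 analysis. The main obstacle is thus not in this Proposition but in establishing the $k=4$ case that it depends on.
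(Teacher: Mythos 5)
Your reduction is not complete: the step that makes the $k=3$ sequence usable is the surjectivity of $\pi_3^{(2)}$, i.e.\ the vanishing of the connecting map $Ext^2(\Delta(h),\Delta(h(3)))\rightarrow Ext^3(\Delta(h),\Delta(h(4)))$, and your justification for $Ext^3(\Delta(h),\Delta(h(4)))=0$ does not hold up. Lemma \ref{basiclemma} only covers $Ext^1$ and $Ext^2$ of the skew terms; it says nothing about $Ext^3(\Delta(h),D_{a+4}\otimes\Lambda^{b-4})=Ext^3(\Lambda^5,D_5)$, so that vanishing is an unproved input. Moreover your arrow-chasing is reversed: in the long exact sequence attached to $0\to\Delta(h(5))\to D_{a+4}\otimes\Lambda^{b-4}\to\Delta(h(4))\to 0$, it is $Ext^3(\Delta(h),\Delta(h(5)))$ that maps \emph{into} $Ext^3(\Delta(h),D_{a+4}\otimes\Lambda^{b-4})$, while $Ext^3(\Delta(h),\Delta(h(4)))$ receives a map from that group and maps onward to $Ext^4(\Delta(h),\Delta(h(5)))$; so even granting the vanishing you invoke, you would only get an injection into $Ext^4(\Delta(h),\Delta(h(5)))$, and iterating requires vanishing of ever higher skew groups $Ext^i(\Lambda^{k+1},D_{k+1})$ which is false in general: a direct computation with $P_*(1,3)$ (the last differential dualizes to $(4\;6\;4)$) gives $Ext^3(\Lambda^4,D_4)=\mathbb{Z}_2\neq 0$, so these higher skew extension groups cannot simply be assumed to die. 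Without the surjectivity of $\pi_3^{(2)}$ your argument only produces, in the case $3\nmid a+b$, an injection $\mathbb{Z}_3\hookrightarrow Ext^2(\Delta(h),\Delta(h(3)))$ with no upper bound, and in the case $3\mid a+b$ no conclusion at all.

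Your other input, the $k=4$ value $Ext^2(\Delta(h),\Delta(h(4)))=\mathbb{Z}_{\gcd(3,a+b)}$, is legitimate to cite (Section 4 does not use the present Proposition, so there is no circularity), but it makes the easy case depend on the hardest computation in the paper. The paper instead runs the same kind of argument one step lower, with the sequence (\ref{long2}) for $k=2$: there the tail term is $Ext^3(\Delta(h),D_{a+2}\otimes\Lambda^{b-2})=Ext^3(\Lambda^3,D_3)=0$ for the cheap reason that $\Lambda^3$ has a projective resolution of length $2$, so exactness and the known orders $Ext^2(\Delta(h),\Delta(h(2)))=\mathbb{Z}_{d_2}$ and $Ext^3(\Delta(h),\Delta(h(3)))=\mathbb{Z}_{d_3}$ from Theorem 4.1 of \cite{MS} (with $d_2=d_3$ exactly when $3\nmid a+b$) decide whether the injection $i_2^{(2)}$ into $\mathbb{Z}_3$ is an isomorphism or zero. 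If you want to salvage your route, you would need an independent proof that the connecting map into $Ext^3(\Delta(h),\Delta(h(4)))$ vanishes (for instance by first bounding $Ext^2(\Delta(h),\Delta(h(3)))$ inside $\mathbb{Z}_3$ via the $k=2$ sequence and analysing the torsion of $Ext^3(\Delta(h),\Delta(h(4)))$), at which point you are essentially back to the paper's argument.
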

	\begin{proof}We observe that $Ext^3(\Delta(h),D_{a+2}\otimes\Lambda^{b-2})=0$, as $Ext^3(\Delta(h),D_{a+2}\otimes\Lambda^{b-2})$ $=Ext^3(\Lambda^3, D_3)$, by Lemma 2.4, and $Ext^3(\Lambda^3, D_3)=0$, as $\Lambda^3$ has a projective resolution of length 2 by \cite{A}. So, for $k=2$, (\ref{long2}) has the following form
	\begin{align}\label{glyko}
	&0\rightarrow Ext^2(\Delta(h),\Delta(h(3)))\xrightarrow{i_2^{(2)}}Ext^2(\Delta(h),D_{a+2}\otimes\Lambda^{b-2})\nonumber \\
	&\xrightarrow{\pi_2^{(2)}}Ext^2(\Delta(h),\Delta(h(2)))\longrightarrow Ext^3(\Delta(h),\Delta(h(3)))\xrightarrow{i_2^{(3)}} 0.
	\end{align}
	Using again Lemma 2.4 we have $Ext^2(\Delta(h),D_{a+2}\otimes\Lambda^{b-2})=\mathbb{Z}_3$, which implies that $Ext^2(\Delta(h),\Delta(h(3)))=\mathbb{Z}_3$ or $0$, as $i_2^{(2)}$ is a monomorphism.
	We also know that $Ext^2(\Delta(h),\Delta(h(2)))=\mathbb{Z}_{d_2}$ and $Ext^3(\Delta(h),\Delta(h(3)))=\mathbb{Z}_{d_3}$, by Theorem 4.1 of \cite{MS}, where $d_2=gcd(a+b,\tbinom{a+b}{2})$ and $d_3=gcd(a+b,\tbinom{a+b}{2},\tbinom{a+b}{3})$.	Simple calculations yield that $d_2=d_3$ if and only if $3\notdivides a+b$, so the exact sequence (\ref{glyko}) implies that $i_2^{(2)}$ is an isomorphism if and only if $3\notdivides a+b$. It follows that $Ext^2(\Delta(h),\Delta(h(3)))=\mathbb{Z}_3$ if and only if $3\notdivides a+b$ and $Ext^2(\Delta(h),\Delta(h(3)))=0$ otherwise.
	\end{proof}
	
	Lemma \ref{basiclemma} (b) also yields that $Ext^2(\Delta(h),D_{a+k-1}\otimes \Lambda^{b-k+1})=0$ for every $k\geq5$ and using again the exact sequence (\ref{long2}) we obtain the following result.
	\begin{prop}
		We have	$Ext^2(\Delta(h),\Delta(h(k)))=0$ for every $5\leq k\leq b$.
	\end{prop}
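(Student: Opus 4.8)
The plan is to read the vanishing straight off the long exact sequence (\ref{long2}), re-instantiated with its running index lowered by one, and to kill the relevant term using Lemma \ref{basiclemma}(b).

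First I would rewrite (\ref{long2}) with $k$ replaced by $k-1$. Since $5\le k\le b$ we have $1\le k-1\le b-1$, so the short exact sequence (\ref{s1}) at this index and the reduction of (\ref{long1}) to (\ref{long2}) are both available (at the endpoint $k=b$ one reads $h(b)=(a+b)$ and $\Delta(h(b))=D_{a+b}$ throughout). The opening segment of the resulting exact sequence is
\[
0\longrightarrow Ext^2(\Delta(h),\Delta(h(k)))\xrightarrow{\ i_{k-1}^{(2)}\ }Ext^2(\Delta(h),D_{a+k-1}\otimes\Lambda^{b-k+1}),
\]
so $i_{k-1}^{(2)}$ is a monomorphism. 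Next I would apply Lemma \ref{basiclemma}(b) with $k-1$ in place of $k$; this is legitimate because $5\le k\le b$ gives $4\le k-1<b$, and in particular $k-1\notin\{2,3\}$. It yields $Ext^2(\Delta(h),D_{a+k-1}\otimes\Lambda^{b-k+1})=0$. Combined with the injectivity of $i_{k-1}^{(2)}$, this forces $Ext^2(\Delta(h),\Delta(h(k)))=0$, which is the claim.

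I do not expect a genuine obstacle here: all of the substance lies in the preparatory results, namely the exact sequence (\ref{long2}) -- which already incorporates the bookkeeping of the $Ext^1$ groups -- and the computation of $Ext^2(\Delta(h),D_{a+k}\otimes\Lambda^{b-k})$ in Lemma \ref{basiclemma}. The only points that require attention are the index shift $k\mapsto k-1$ and the verification that $k-1$ lands in the range $[0,b)$ on which Lemma \ref{basiclemma} is stated; both are immediate consequences of $5\le k\le b$, and the endpoint $k=b$, where $h(b)$ degenerates to the single row $(a+b)$, causes no trouble once one adopts the convention $\Delta(h(b))=D_{a+b}$.
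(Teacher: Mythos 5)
Your argument is exactly the paper's: shift the index in the exact sequence (\ref{long2}) to $k-1$, use the injectivity of $i_{k-1}^{(2)}$, and kill the middle term $Ext^2(\Delta(h),D_{a+k-1}\otimes\Lambda^{b-k+1})$ via Lemma \ref{basiclemma}(b), which applies since $4\le k-1<b$. The index bookkeeping and the endpoint remark are fine, so the proposal is correct and matches the paper's proof.
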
 
	
	\section{The case $k=4$ of the Theorem}
	In this Section we show Theorem 1.1 for $k=4$, which constitutes the main part of the paper. This is done using a refinement of the main strategy of \cite{MS}, namely by first determining a cyclic generator of $Ext^2(\Delta(h),D_{a+3}\otimes \Lambda^{b-3})$, Subsection 4.1, and then computing its image in $Ext^2(\Delta(h),D_{a+2}\otimes \Lambda^{b-2})$, Subsection 4.3, with the aid of the relations that we establish in Subsection 4.2.
	
	In order to compute $Ext^2(\Delta(h),\Delta(h(4)))$, we use the exact sequence (\ref{long2}) for $k=3$, which takes the following form
	\begin{align*}
	&0\longrightarrow
	Ext^2(\Delta(h),\Delta(h(4)))
	\xrightarrow{i_3^{(2)}}Ext^2(\Delta(h),D_{a+3}\otimes \Lambda^{b-3})\\
	&\xrightarrow{\pi_3^{(2)}}
	Ext^2(\Delta(h),\Delta(h(3))).
	\end{align*}
	By Lemma \ref{basiclemma} (b) we have $Ext^2(\Delta(h),D_{a+3}\otimes \Lambda^{b-3})=\mathbb{Z}_3$. 
	In order to determine whether $Ext^2(\Delta(h),\Delta(h(4)))=0$ or $\mathbb{Z}_3$, we consider the following composition  
	\begin{equation*}
	Ext^2(\Delta(h),D_{a+3}\otimes \Lambda^{b-3})\xrightarrow{\pi_3^{(2)}} Ext^2(\Delta(h),\Delta(h(3)))\xrightarrow{i_2^{(2)}} Ext^2(\Delta(h),D_{a+2}\otimes \Lambda^{b-2}).
	\end{equation*}
	Let $\phi=i_2^{(2)}\circ\pi_3^{(2)}$. Then, $\phi $ is a map from $\mathbb{Z}_3$ to $\mathbb{Z}_3$ by Lemma \ref{basiclemma}.
	Our aim in this Section is to find a cyclic generator of $Ext^2(\Delta(h),D_{a+3}\otimes \Lambda^{b-3})$ and determine its image under $\phi$ as a multiple of the cyclic generator of $Ext^2(\Delta(h),D_{a+2}\otimes \Lambda^{b-2})$ described by \cite{MS} in Section 4.1.
	\subsection{Cyclic generator of $Ext^2(\Delta (h),D_{a+3}\otimes \Lambda^{b-3})$.}
	In this subsection we will determine a cyclic generator of the group $Ext^2(\Delta (h),D_{a+3}\otimes \Lambda^{b-3})$. This is accomplished using the recursion of Lemma 2.3 for $i=2$, according to which the matrix $e^{(2)}(\Delta(h),D_{a+3}\otimes \Lambda^{b-3}V)$ of the differential $Hom(\Theta_2(a,b),D_{a+3}\otimes\Lambda^{b-3})$ has the following form
		\[
	\left(
	\begin{array}{c|c|c}
	A &*&*\\
	\hline \;
	
	&B& \\
	\hline
	&&C
	\end{array}
	\right) ,
	\]
	where $A=e^{(1)}(\Delta(a+1,1^{b-1})
	,D_{a+3}\otimes\Lambda^{b-3})$, $B=-e^{(2)}(\Delta(1,1^{b-1}),D_{4}\otimes\Lambda^{b-4})$ and $C=-e^{(2)}(\Delta(1,1^{b-1}),D_{3}\otimes\Lambda^{b-3})$.
	
	Consider the following standard basis elements:
	\begin{itemize}
	\item $T_{i,j}^1=1^{(a)}i^{(2)}j\otimes 2\cdots \hat{i} \cdots \hat{j} \cdots b$ \\
	for $i\in\{2,\dots,b-1\}$, $j\in\{i+1,\dots,b\}$ \\
	in $Hom(D_a\otimes {D_1}^{\otimes (i-2)}\otimes D_2 \otimes {D_1}^{\otimes (b-i)}, D_{a+3}\otimes \Lambda^{b-3})$,
	\item $T_{i,j}^k=1^{(a-1)}ki^{(2)}j\otimes 1\cdots\hat{k}\cdots \hat{i} \cdots \hat{j} \cdots b$ \\
	for $k\in\{2,\dots,b-2\}$,
	$i\in\{k+1,\dots,b-1\}$, $j\in\{i+1,\dots,b\}$ \\
	in $Hom(D_a\otimes {D_1}^{\otimes (i-2)}\otimes D_2 \otimes {D_1}^{\otimes (b-i)},D_{a+3}\otimes \Lambda^{b-3})$,
	\end{itemize}
    and the standard basis elements:
    \begin{itemize}
    \item $B_{1,j}^{a,b}=1^{(a+2)}j\otimes 2\cdots \hat{j} \cdots (b-1)$ for $j\in\{2,\dots ,b-1\}$, \\
    in $Hom(D_{a+2}\otimes {D_1}^{\otimes (b-2)},D_{a+3}\otimes \Lambda^{b-3})$, ,
    \item $B_{i,1}^{a,b}=1^{(a)}i^{(3)}\otimes 2\cdots\hat{i}\cdots(b-1)$ for $i\in\{2,\dots,b-1\}$, \\
    in $Hom(D_a\otimes {D_1}^{\otimes (i-2)}\otimes D_3\otimes {D_1}^{\otimes (b-i-1)}, D_{a+3} \otimes \Lambda^{b-3})$,
    \item $B_{i,j}^{a,b}=1^{(a-1)}i^{(3)}j\otimes 1\cdots \widehat{\{i,j\}} \cdots (b-1)$  for $i\in\{2,\dots,b-1\}$ \\ and $j\in\{2,\dots,i-1\}\cup\{i+1,\dots,b-1\}$, \\
    in  $Hom(D_a\otimes {D_1}^{\otimes (i-2)}\otimes D_3\otimes {D_1}^{\otimes (b-i-1)}, D_{a+3} \otimes \Lambda^{b-3})$.
    \end{itemize}

Recall that the cokernel of the differential $Hom(\Theta_i (a,b),M)$ of the complex $Hom(P_{*} (a,b),M)$ is denoted by $E^i (\Delta(h),M)$ (cf. 2.4).

Let $\pi: Hom(P_i (a,b),M)\rightarrow E^{i}(\Delta(h),M)$ be the natural projection.

\smallskip
\noindent \textit{Remark.} In the following proposition and its proof, if the lower bound of the summation index is less than the upper bound, then we regard the sum as zero (empty sum).

\begin{prop} Let 
\begin{align*}\Gamma_{a,b}=&\tbinom{a+2} {3}\sum_{j=2}^{b-1}{(-1)}^{j}B_{1,j}^{a,b} -\Big(aB_{2,1}^{a,b}+\sum_{j=3}^{b-1}{(-1)}^{j}B_{2,j}^{a,b}\Big)+ \\
&\sum_{i=3}^{b-2}{(-1)}^{i-1}\Big(aB_{i,1}^{a,b} -\sum_{j=2}^{i-1} {(-1)}^j B_{i,j}^{a,b}+\sum_{j=i+1}^{b-1} {(-1)}^{j} B_{i,j}^{a,b}  \Big)+\\
&{(-1)}^{b}\Big( aB_{b-1,1}^{a,b}-\sum_{j=2}^{b-2}{(-1)}^jB_{b-1,j}^{a,b}\Big).
\end{align*} 
Then, for $b\geq 3$, $3\pi(\Gamma_{a,b})=0$ in $E^{2}(\Delta(h),D_{a+3}\otimes \Lambda^{b-3})$ and thus $\pi(\Gamma_{a,b})$ is an element of $Ext^2(\Delta(h),D_{a+3}\otimes \Lambda^{b-3})$.          
\end{prop}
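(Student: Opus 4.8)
The plan is to verify directly that $3\Gamma_{a,b}$ lies in the image of the differential $Hom(\Theta_2(a,b),D_{a+3}\otimes\Lambda^{b-3})$, i.e. that $3\Gamma_{a,b}$ is a $\mathbb{Z}$-linear combination of columns of the matrix $e^{(2)}(\Delta(h),D_{a+3}\otimes\Lambda^{b-3})$. Since $\Gamma_{a,b}$ is supported entirely on the summand $Hom(P_1(a+1,b-1),D_{a+3}\otimes\Lambda^{b-3})$ corresponding (via the recursion of Lemma~\ref{skewmatrix}) to the block $A=e^{(1)}(\Delta(a+1,1^{b-1}),D_{a+3}\otimes\Lambda^{b-3})$, the natural candidate for a preimage is a combination of the columns indexed by the $T^k_{i,j}$ (which sit in the domain of $\Theta_2$). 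Concretely, I would form the element
\[
X_{a,b}=\sum_{i,j}(-1)^{\epsilon(i,j)}T^1_{i,j}+\sum_{k,i,j}(-1)^{\epsilon(k,i,j)}T^k_{i,j}
\]
with signs $\epsilon$ chosen so that the telescoping in the $D_*$-comultiplication and $\Lambda^*$-multiplication produces exactly the coefficients $\binom{a+2}{3}$, $a$, and $\pm1$ appearing in $\Gamma_{a,b}$, and then show $Hom(\Theta_2(a,b),-)(X_{a,b})=3\Gamma_{a,b}$ up to elements killed by $\pi$. Applying Remark~2.3, $Hom(\Theta_2(a,b),-)$ acts on each $T^k_{i,j}$ as the alternating sum $\sum_{s\ge1}(-1)^{s-1}\theta_s$; each $\theta_s$ contracts a $D_2$ or $D_3$ back via $\triangle_{u,v}$, contributing a binomial coefficient $\binom{2}{1}=2$ or $\binom{3}{1}=3$ or $\binom{3}{2}=3$, which is where the factor $3$ and the factor $2$ (and hence, modulo the image, the "$3$") come from.

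The cleanest route, and the one I would actually carry out, is the induction on $b$ built into the recursion. For the base case $b=3$ one has $\Lambda^{b-3}=\Lambda^0$, the partition $h(3)$ is a single column, the resolution is short, and the claim reduces to a finite explicit matrix computation (similar in spirit to the computation of $e^{(2)}(1,3,D_4)$ in the proof of Lemma~\ref{basiclemma}): one exhibits a column combination yielding $3\Gamma_{a,3}$. For the inductive step, I would use the block decomposition
\[
e^{(2)}(\Delta(h),D_{a+3}\otimes\Lambda^{b-3})=\begin{pmatrix}A&*&*\\ &B&\\ &&C\end{pmatrix}
\]
from Lemma~\ref{skewmatrix}, noting $A=e^{(1)}(\Delta(a+1,1^{b-1}),D_{a+3}\otimes\Lambda^{b-3})$, and observe that $\Gamma_{a,b}$ restricted to the rows corresponding to $A$ "is" (via the isomorphism $f_{i}$ of the proof of Lemma~\ref{skewmatrix}, applied to the relevant $GL_{n-1}$-module, together with the $D(a)\otimes -$ recursion $P_1(a,b)=P_0(a+1,b-1)\oplus D(a)\otimes P_1(1,b-1)$) built out of $\Gamma_{a+1,b-1}$ plus an extra tail coming from the $B_{1,j}^{a,b}$ and $B_{2,j}^{a,b}$ terms. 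So the inductive hypothesis $3\pi(\Gamma_{a+1,b-1})=0$ in $E^2(\Delta(a{+}1,1^{b-1}),D_{a+3}\otimes\Lambda^{b-3})$ handles the bulk of $3\Gamma_{a,b}$; what remains is to absorb the boundary terms — the $j=2$ terms of each inner sum and the $i=b-1$ block — into column operations using the diagonal block $B^2(\cdots)$ and the off-diagonal $*$-blocks. This is exactly the role played in Lemma~\ref{canonicalmatrix}/Lemma~\ref{skewmatrix} by the matrix $B^i(a,b,M)$, whose entries are the binomials $\binom{a_t}{u}$, here evaluated to give the needed $3$'s.

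The main obstacle, as usual with these hook computations, is bookkeeping: keeping the signs $(-1)^{i-1}$, $(-1)^j$, $(-1)^b$ consistent across the four pieces of $\Gamma_{a,b}$ (the $B_{1,j}$ piece, the $B_2$ piece, the generic $3\le i\le b-2$ piece, and the $i=b-1$ piece), and matching them against the signs $(-1)^{s-1}$ coming from $\sum(-1)^{s-1}\theta_s$ and the signs coming from rearranging wedge factors into standard order after $\theta_s$ contracts an index (the "non-zero multiple of a standard basis element" case flagged in the proof of Lemma~\ref{skewmatrix}). I expect that once a preimage $X_{a,b}$ is written down — with the $T^1_{i,j}$ accounting for the $B_{1,j}$ and $B_{i,1}$ terms and the $T^k_{i,j}$ for the $B_{i,j}$ terms with $j\ge2$ — the verification $Hom(\Theta_2(a,b),-)(X_{a,b})\equiv 3\Gamma_{a,b}$ is a routine but lengthy check using Remark~2.3, Remark~2.4, and the straightening law of Subsection~2.2; the genuinely delicate point is ensuring that the "extra" terms produced by $\theta_s$ that are \emph{not} of the form $B^{a,b}_{i,j}$ either cancel in pairs or lie in the image of $\Theta_2$ and hence vanish under $\pi$. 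Establishing that cancellation, rather than computing the binomials, is where I would spend the effort.
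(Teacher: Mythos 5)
Your overall strategy coincides with the paper's: exhibit an explicit integral combination of the standard basis elements $T^1_{i,j},T^k_{i,j}$ in the domain of $Hom(\Theta_2(a,b),D_{a+3}\otimes\Lambda^{b-3})$ whose image is exactly $3\Gamma_{a,b}$, and prove this by induction on $b$ with base case $b=3$. But your concrete plan contains a genuine gap, and it begins with a false support claim: $\Gamma_{a,b}$ is \emph{not} supported on the summand $Hom(P_1(a+1,b-1),D_{a+3}\otimes\Lambda^{b-3})$. Only the terms $B^{a,b}_{1,j}$ (first weight entry $a+2$) lie in the rows of the block $A$; every term $B^{a,b}_{i,j}$ with $i\ge 2$ has first weight entry $a$ and lies in the rows of the blocks $B$ (one $1$ in the exterior part) or $C$ (no $1$ in the exterior part) of Lemma~\ref{skewmatrix}. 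Since the matrix is block triangular, the $B$-row and $C$-row components of $3\Gamma_{a,b}$ can only be produced by the columns indexed by the sets $B_{1,1}$ and $B_{1,0}$, so the bulk of the verification necessarily takes place in those blocks, not in $A$.

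This also derails the inductive step as you state it. The block $A$ is $e^{(1)}(\Delta(a+1,1^{b-1}),D_{a+3}\otimes\Lambda^{b-3})$, a degree-one differential with the \emph{same} coefficient module, so its cokernel is an $E^1$ of the smaller hook, not an $E^2$; moreover $\Gamma_{a+1,b-1}$ is by definition an element attached to coefficients $D_{(a+1)+3}\otimes\Lambda^{(b-1)-3}=D_{a+4}\otimes\Lambda^{b-4}$, so the proposed hypothesis ``$3\pi(\Gamma_{a+1,b-1})=0$ in $E^2(\Delta(a+1,1^{b-1}),D_{a+3}\otimes\Lambda^{b-3})$'' does not even parse. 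The correct reduction (and the one the paper carries out) goes through the blocks $B$ and $C$ via the isomorphism $f_2$ from the proof of Lemma~\ref{skewmatrix}: the terms $\theta_s(T^k_{i,j})$ with $k\ge 2$, $s\ge 2$ are transported to the hook $(1,1^{b-1})$ with coefficients $D_4\otimes\Lambda^{b-4}$, which is exactly where the inductive hypothesis applies, namely at $a=1$ (the identity for $\Gamma_{1,b-1}$); the terms $\theta_s(T^1_{i,j})$ with $s\ge 2$ are transported to coefficients $D_3\otimes\Lambda^{b-3}$ and are handled by the known relation of \cite{MS}, Lemma 4.4, for the generator $\gamma_{1,b-1}$; and the residual $\theta_1$-terms either yield the $3\tbinom{a+2}{3}B^{a,b}_{1,j}$ part (via $\theta_1(T^1_{2,j})=\tbinom{a+2}{2}B^{a,b}_{1,j-1}$ and $a\tbinom{a+2}{2}=3\tbinom{a+2}{3}$) or cancel in pairs using $\theta_1(T^2_{i,j})=a\,\theta_1(T^1_{i,j})$. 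Finally, the identity itself — in the paper, $A_{a,b}+C_{a,b}=3\Gamma_{a,b}$ on the nose, with preimage $a\sum(-1)^{j-i+1}T^1_{i,j}+\sum(-1)^{j-i-k}T^k_{i,j}$ — is the entire content of the proposition; leaving the signs unspecified and the cancellations ``to be checked'' means the proof has not been given. As written, your argument would not go through without replacing the $A$-block induction by the $B$/$C$-block reduction just described and then carrying out that computation.
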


\begin{proof} Let \[A_{a,b}=a\sum_{i=2}^{b-1}\sum_{j=i+1}^{b} {(-1)}^{j-i+1}\sum_{s=1}^{b-1}{(-1)}^{s-1}\theta_s(T_{i,j}^1),\] \[C_{a,b}=\sum_{k=2}^{b-2} \sum_{i=k+1}^{b-1} \sum_{j=i+1}^{b} {(-1)}^{j-i-k} \sum_{s=1}^{b-1}{(-1)}^{s-1}\theta_s (T_{i,j}^k).\] We will use induction on $b\geq3$ to show that 
	\begin{align}\label{eqnionia}
     A_{a,b}+C_{a,b}=3\Gamma_{a,b}
	\end{align}
	For $b=3$ we observe that $C_{a,3}=0$ as it is an empty sum, and for $T_{2,3}^1=1^{(a)}2^{(2)}3$ in $Hom(D_a\otimes D_2\otimes D_1,D_{a+3})$ we have that \begin{align*}&A_{a,3}+C_{a,3}=A_{a,b}=a(\theta_1(T_{2,3})-\theta_1(T_{2,3}))=a\big({\tbinom{a+2}{2}}\cdot1^{(a+2)}2-3\cdot 1^{(a)}2^{(3)}\big)\\
	&=3{\tbinom{a+2}{3}}\cdot B_{1,2}^{a,3}-3a\cdot B_{2,1}^{a,3}=3\Gamma_{a,3}.
	\end{align*}
    Let $b>3$. For $i=2$ and $j\in\{3,\dots,b\}$ we observe that  
	$$\theta_1 (T_{2,j}^1)=\tbinom{a+2}{2} 1^{(a+2)}(j-1)\otimes 2\cdots \widehat{(j-1)} \cdots (b-1)=\tbinom{a+2}{2}B_{1,j-1}^{a,b},$$
	and so we have 
	\begin{align} \label{a1}
	A_{a,b}&=a\sum_{i=2}^{b-1}\sum_{j=i+1}^{b} {(-1)}^{j-i+1}\theta_1(T_{i,j}^1) \nonumber \\
	&\,\,\,\,\,\,\,+a\sum_{i=2}^{b-1}\sum_{j=i+1}^{b} {(-1)}^{j-i+1}\sum_{s=2}^{b-1}{(-1)}^{s-1}\theta_s(T_{i,j}^1)\nonumber\\
	&=a\tbinom{a+2}{2}\sum_{j=3}^{b}{(-1)}^{j-1}B_{1,j-1}^{a,b}+
	a\sum_{i=3}^{b-1}\sum_{j=i+1}^{b} {(-1)}^{j-i+1}\theta_1(T_{i,j}^1)\nonumber \\
	&\,\,\,\,\,\,\,+a\sum_{i=2}^{b-1}\sum_{j=i+1}^{b} {(-1)}^{j-i+1}\sum_{s=2}^{b-1}{(-1)}^{s-1}\theta_s(T_{i,j}^1).
	\end{align}
	Consider now the standard basis elements: 
	\begin{itemize}
	\item $t_{1,j-1}^0 = 1^{(2)}(j-1)\otimes 2\cdots \widehat{(j-1)} \cdots (b-1)$ in \\
	$Hom(D_2\otimes {D_1}^{\otimes (b-1)},D_3 \otimes \Lambda^{b-1})$, for $i=2$ and $j\in\{3,\dots,b\}$,
	\item $t_{i-1,j-1}^0={(i-1)}^{(2)}(j-1)\otimes 1\cdots \widehat{(i-1)} \cdots \widehat{(j-1)} \cdots (b-1)$ in \\
	$Hom({D_1}^{\otimes (i-2)}$ $\otimes D_2\otimes {D_1}^{\otimes (b+2-i)},D_3 \otimes \Lambda^{b-1})$, for $i\in\{3,\dots,b-1\}$ \\
	and $j\in\{i+1,\dots,b\}$.
	\end{itemize}
	We observe that $\theta_{s-1}(t^0_{i-1,j-1})=f_2(\theta_s(T^1_{i,j}))$ for every $i\in\{2,\dots,b-1\}$, $j\in\{i+1,\dots,b\}$ and $s\in\{2,\dots,b-1\}$, where $f_2$ is the isomorphism described in Lemma \ref{skewmatrix} for $i=2$. By \cite{MS}, Lemma 4.4 for $k=2$, we have the following relation in $Ext^2(\Delta(1,1^{(b-1)}),D_3 \otimes \Lambda^{b-3})$
	\begin{equation}\label{eqms}
	Hom(\Theta_2(a,b),D_{a+2}\otimes \Lambda^{b-2})(\sum_{i=2,j>i}^{b-1,b}{(-1)}^{j-i+1}t_{i-1,j-1}^0)=3\gamma_{1,b-1},
	\end{equation}
	where $\gamma_{1,b-1}=\sum_{i=2}^{b-1}{(-1)}^i\delta_{i-1}^{1,b-1}$,
	\begin{itemize}
		\item $\delta_1^{1,b-1}=1^{(3)}\otimes 2\cdots (b-2)$ in $Hom(D_3\otimes {D_1}^{\otimes (b-3)},D_3\otimes\Lambda^{b-3})$, for $i=2$, 
		\item $\delta_{i-1}^{1,b-1}={(i-1)}^3\otimes 1\cdots \widehat{(i-1)} \cdots (b-2)$ in $Hom({D_1}^{\otimes (i-2)}\otimes D_3\otimes {D_1}^ {\otimes (b-i-1)},$ $D_3\otimes\Lambda^{b-3})$, for $i\in\{3,\dots,b-1\}$,
	\end{itemize}
and $\pi(\gamma_{1,b-1})$ is the generator of $Ext^2(\Delta(1,1^{b-1}),D_3\otimes \Lambda^{b-3})$ described by \cite{MS} in Subsection 4.1.
Using Remark 2.5, (\ref{eqms}) is equivalent to
	\begin{equation}\label{rel}
\sum_{i=2}^{b-1}\sum_{j=i+1}^{b} {(-1)}^{j-i+1}\sum_{s=2}^{b-1}{(-1)}^{s}\theta_{s-1}(t_{i-1,j-1}^0)              =3\sum_{i=2}^{b-1}{(-1)}^i\delta_{i-1}^{1,b-1}.
\end{equation} 
By the proof of Lemma \ref{skewmatrix}, we have that $\delta_{i-1}^{1,b-1}=f_2(B_{i,1}^{a,b})$ where
\begin{itemize}
	\item $B_{2,1}^{a,b}=1^{(a)}2^{(3)}\otimes 3\cdots (b-1)$ for $i=2$,
	\item $B_{i,1}^{a,b}=1^{(a)}i^{(3)}\otimes 2\cdots \hat{i} \cdots (b-1)$ for $i\in\{3,\dots,b-1\}$.
\end{itemize}
and $f_2$ is the isomorphism described in Lemma \ref{skewmatrix} for $i=2$. 

Applying the isomorphism $f_2^{-1}$ to Equation (\ref{rel}) we have, 
\begin{equation*}
 \sum_{i=2}^{b-1}\sum_{j=i+1}^{b} {(-1)}^{j-i+1}\sum_{s=2}^{b-1}{(-1)}^{s}f_2^{-1}(\theta_{s-1}(t_{i-1,j-1}^0))              =3\sum_{i=2}^{b-1}{(-1)}^i f_2^{-1}(\delta_{i-1}^{1,b-1})
\end{equation*}
or, equivalently,
\begin{equation}\label{e14e}
 \sum_{i=2}^{b-1}\sum_{j=i+1}^{b} {(-1)}^{j-i+1}\sum_{s=2}^{b-1}{(-1)}^{s}\theta_s(T_{i,j}^1)             =3\sum_{i=2}^{b-1}{(-1)}^iB_{i,1}^{a,b}.
\end{equation}
Using (\ref{e14e}), (\ref{a1}) is equivalent to
\begin{align}\label{a4}
A_{a,b}&=a\tbinom{a+2}{2}\sum_{i=2}^{b-1}{(-1)}^{i}B_{1,i}^{a,b}+a\sum_{i=3}^{b-1}\sum_{j=i+1}^{b} {(-1)}^{j-i+1}\theta_1(T_{i,j}^1)\nonumber\\
&\,\,\,\,\,\,-3a\sum_{i=2}^{b-1}{(-1)}^i B_{i,1}^{a,b}.
\end{align}
We also have
\begin{align}\label{a8}
C_{a,b}&=\sum_{i=3}^{b-1} \sum_{j=i+1}^{b} {(-1)}^{j-i}\theta_1 (T_{i,j}^2) + \sum_{k=3}^{b-2} \sum_{i=k+1}^{b-1} \sum_{j=i+1}^{b} {(-1)}^{j-i-k}\theta_1 (T_{i,j}^k)\nonumber \\
&\,\,\,\,\,\,+\sum_{k=2}^{b-2} \sum_{i=k+1}^{b-1} \sum_{j=i+1}^{b} {(-1)}^{j-i-k} \sum_{s=2}^{b-1}{(-1)}^{s-1}\theta_s (T_{i,j}^k)\nonumber \\
&=\sum_{i=3}^{b-1} \sum_{j=i+1}^{b} {(-1)}^{j-i}\theta_1 (T_{i,j}^2) +\sum_{k=2}^{b-2} \sum_{i=k+1}^{b-1} \sum_{j=i+1}^{b} {(-1)}^{j-i-k} \sum_{s=2}^{b-1}{(-1)}^{s-1}\theta_s (T_{i,j}^k),
\end{align}	
as $\theta_1 (T_{i,j}^k)=0$ for every $k\in\{3\dots,b-2\}$, because of the $12$ part of $T_{i,j}^k$ in $\Lambda^{b-3}$. 
Consider now the basis elements: 
\begin{itemize}
	\item $t_{i-1,j-1}^1=1{(i-1)}^{(2)}(j-1)\otimes 2\cdots\widehat{(i-1)}\cdots\widehat{(j-1)}\cdots (b-1)$ in \\
	$Hom({D_1}^{\otimes (i-2)}\otimes D_2 \otimes {D_1}^{\otimes (b-i)},D_4\otimes\Lambda^{b-4})$, for $k=2$ and $i\in\{3,\dots,b-1\}$, $j\in\{i+1,\dots,b\}$,
	\item $t_{i-1,j-1}^{k-1}=(k-1){(i-1)}^{(2)}(j-1)\otimes 1\cdots\widehat{(k-1)}\cdots\widehat{(i-1)}\cdots \widehat{(j-1)} \cdots (b-1)$ in $Hom({D_1}^{\otimes (i-2)}\otimes D_2 \otimes {D_1}^{\otimes (b-i)},D_4\otimes\Lambda^{b-4})$ , for $k\in\{3,\dots,b-2\}$, $i\in\{k+1,\dots,b-1\}$, $j\in\{i+1,\dots,b\}$.
\end{itemize}
 By induction on $b$, for $a=1$, we have  $A_{1,b-1}+C_{1,b-1}=3\Gamma_{1,b-1},$ or, equivalently,
\begin{align}\label{eq10}
&\sum_{k=2}^{b-2} \sum_{i=k+1}^{b-1} \sum_{j=i+1}^{b} {(-1)}^{j-i-k+1} \sum_{s=2}^{b-1}{(-1)}^{s}\theta_{s-1} (t_{i-1,j-1}^{k-1})\nonumber \\
&=3\sum_{j=3}^{b-1}{(-1)}^{j-1}B_{1,j-1}^{1,b-1} -3\Big(B_{2,1}^{1,b-1}+\sum_{j=4}^{b-1}{(-1)}^{j-1}B_{2,j-1}^{1,b-1}\Big)\nonumber \\
&\,\,\,\,\,\,+3\sum_{i=4}^{b-1}{(-1)}^{i}\Big(B_{i-1,1}^{1,b-1} -\sum_{j=3}^{i-1} {(-1)}^{j-1} B_{i-1,j-1}^{1,b-1}+\sum_{j=i+1}^{b-1} {(-1)}^{j-1} B_{i-1,j-1}^{1,b-1}  \Big)\nonumber \\
&\,\,\,\,\,\,+3{(-1)}^{b-1}\Big(B_{b-2,1}^{1,b-1}-\sum_{j=3}^{b-2}{(-1)}^{j-1}B_{b-2,j-1}^{1,b-1}\Big).
\end{align}
Applying $f_2^{-1}$ to (\ref{eq10}), using that $f_2(B_{i,j}^{a,b})=B_{i-1,j-1}^{1,b-1}$ and $f_2(\theta_s(T_{i,j})^k)=\theta_{s-1}(t_{i-1,j-1}^{k-1})$ for $i\geq 2$, $j\in\{3,\dots,i-1\}\cup\{i+1,\dots,b-2\}$, we obtain 
\begin{align}\label{eq11}
&\sum_{k=2}^{b-2} \sum_{i=k+1}^{b-1} \sum_{j=i+1}^{b} {(-1)}^{j-i-k+1} \sum_{s=2}^{b-1}{(-1)}^{s}\theta_{s} (T_{i,j}^{k})\nonumber \\
&=3\sum_{j=3}^{b-1}{(-1)}^{j-1}B_{2,j}^{a,b} -3\Big(B_{3,2}^{a,b}+\sum_{j=4}^{b-1}{(-1)}^{j-1}B_{3,j}^{a,b}\Big)\nonumber \\
&\,\,\,\,\,\,+3\sum_{i=4}^{b-2}{(-1)}^{i}\Big(B_{i,2}^{a,b} -\sum_{j=3}^{i-1} {(-1)}^{j-1} B_{i,j}^{a,b}+\sum_{j=i+1}^{b-1} {(-1)}^{j-1} B_{i,j}^{a,b}  \Big)\nonumber \\
&\,\,\,\,\,\,+3{(-1)}^{b-1}\Big(B_{b-1,2}^{a,b}-\sum_{j=3}^{b-2}{(-1)}^{j-1}B_{b-1,j}^{a,b}\Big).
\end{align}
Equation (\ref{eq11}) implies that (\ref{a8}) is equivalent to
\begin{align}\label{eq12}
C_{a,b}&=\sum_{i=3}^{b-1} \sum_{j=i+1}^{b} {(-1)}^{j-i}\theta_1 (T_{i,j}^2) +3\sum_{j=3}^{b-1}{(-1)}^{j-1}B_{2,j}^{a,b} \nonumber \\
&\,\,\,\,\,\,-3\Big(B_{3,2}^{a,b}+\sum_{j=4}^{b-1}{(-1)}^{j-1}B_{3,j}^{a,b}\Big)\nonumber \\
&\,\,\,\,\,\,+3\sum_{i=3}^{b-2}{(-1)}^{i}\Big(B_{i,2}^{a,b} -\sum_{j=3}^{i-1} {(-1)}^{j-1} B_{i,j}^{a,b}+\sum_{j=i+1}^{b-1} {(-1)}^{j-1} B_{i,j}^{a,b}  \Big)\nonumber \\
&\,\,\,\,\,\,+3{(-1)}^{b-1}\Big(B_{b-1,2}^{a,b}-\sum_{j=3}^{b-2}{(-1)}^{j-1}B_{b-1,j}^{a,b}\Big).
\end{align}
We observe that $\theta_1(T_{i,j}^2)=a\theta_1(T_{i,j}^1)$ for $i\in\{3,\dots,b-1\}$, $j\in\{i+1,\dots,b\}$, as $\theta_1(T_{i,j}^1)=1^{(a)}{(i-1)}^2(j-1)\otimes 1\cdots\widehat{(i-1)}\cdots \widehat{(j-1)}\cdots (b-1)$, and
$\theta_1(T_{i,j}^2)=a1^{(a)}{(i-1)}^2(j-1)\otimes 1\cdots\widehat{(i-1)}\cdots \widehat{(j-1)}\cdots (b-1)$. Using this observation and adding equations  (\ref{a4}) and (\ref{eq12}), we obtain $A_{a,b}+C_{a,b}=3\Gamma_{a,b}$
and we have proved inductively equation (\ref{eqnionia}).

As we have the relations $\sum_{s=1}^{b-1}{(-1)}^{s-1}\pi(\theta_s(T_{i,j}^k))=0$ for every $k\in\{1,\dots,b-2\}$, $i\in\{k+1,\dots,b-1\}$, $j\in\{i+1,\dots,b\}$, it follows that $\pi(A_{a,b})=\pi(C_{a,b})=0$ and so $3\pi(\Gamma_{a,b})=0$.\end{proof}
\begin{prop}
The element $\pi(\Gamma_{a,b})$ of Proposition 3.1 is a cyclic generator of  $Ext^2(\Delta(h),D_{a+3}\otimes\Lambda^{b-3})$.
\end{prop}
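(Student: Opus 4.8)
The plan is to reduce the assertion to the nonvanishing of $\pi(\Gamma_{a,b})$ and then to prove nonvanishing by induction on $b$, using the block structure of the differential. By Lemma~\ref{basiclemma}(b) with $k=3$, for $b\ge4$ one has $Ext^2(\Delta(h),D_{a+3}\otimes\Lambda^{b-3})\cong\mathbb{Z}_3$, a cyclic group of order $3$, and this coincides with the torsion subgroup of $E^2(\Delta(h),D_{a+3}\otimes\Lambda^{b-3})$. Since $3\pi(\Gamma_{a,b})=0$ by the previous proposition, $\pi(\Gamma_{a,b})$ lies in that torsion subgroup, and because every nonzero element of $\mathbb{Z}_3$ is a generator it suffices to show $\pi(\Gamma_{a,b})\neq0$, i.e. that $\Gamma_{a,b}$ is not in the image of $Hom(\Theta_2(a,b),D_{a+3}\otimes\Lambda^{b-3})$. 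I would establish this by induction on $b\ge3$, for all $a$ simultaneously, mirroring the induction in the proof of the previous proposition.

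For the base case $b=3$ we have $D_{a+3}\otimes\Lambda^{0}=D_{a+3}$ and $\Gamma_{a,3}=\binom{a+2}{3}B_{1,2}^{a,3}-aB_{2,1}^{a,3}$. Using the recursion of Lemma~\ref{canonicalmatrix} one writes the $3\times3$ matrix of $Hom(\Theta_2(a,3),D_{a+3})$ explicitly; a short computation of its $2\times2$ minors shows that its cokernel has torsion $\mathbb{Z}_{3/\gcd(3,a)}$, generated by the class of $\Gamma_{a,3}$ (so $\pi(\Gamma_{a,3})\neq0$ precisely when $3\nmid a$, consistently with the order of the group). In the range $b\ge4$ actually used later it is enough to take the base case $b=3$, $a=1$ only, which is exactly the computation of $e^{(2)}(1,3,D_4)$ in the proof of Lemma~\ref{basiclemma}: there $\Gamma_{1,3}$ corresponds to the column $(-1,0,1)^{t}$, which is not in the image of that matrix although three times it is.

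For the inductive step let $b\ge4$. By Lemma~\ref{skewmatrix} for $i=2$ the matrix $e^{(2)}(\Delta(h),D_{a+3}\otimes\Lambda^{b-3})$ is block upper triangular with diagonal blocks $A,B,C$, where $B=-e^{(2)}(\Delta(1,1^{b-1}),D_4\otimes\Lambda^{b-4})$; hence the $B_{2,1}$-component of $Hom(\Theta_2(a,b),D_{a+3}\otimes\Lambda^{b-3})(X)$ depends only on the $B_{1,1}$-component of $X$ and is $B$ applied to it. Comparing the definition of $\Gamma_{a,b}$ with equations~(\ref{eq10}) and~(\ref{eq11}) of the previous proof, one reads off that the $B_{2,1}$-component of $\Gamma_{a,b}$ equals $f_2^{-1}(\Gamma_{1,b-1})$, where $f_2$ is the relabeling isomorphism of Lemma~\ref{skewmatrix}. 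Thus, if $\Gamma_{a,b}=Hom(\Theta_2(a,b),D_{a+3}\otimes\Lambda^{b-3})(X)$, then projecting onto the $B_{2,1}$-block and applying $f_2$ (via the commuting square in the proof of Lemma~\ref{skewmatrix}) shows that $\Gamma_{1,b-1}$ is in the image of $Hom(\Theta_2(1,b-1),D_4\otimes\Lambda^{b-4})$, that is $\pi(\Gamma_{1,b-1})=0$. This contradicts the inductive hypothesis, according to which $\pi(\Gamma_{1,b-1})$ generates $Ext^2(\Delta(1,1^{b-1}),D_4\otimes\Lambda^{b-4})$, which by Lemma~\ref{basiclemma} (or by the base case when $b-1=3$) is $\mathbb{Z}_3\neq0$. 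Therefore $\Gamma_{a,b}$ is not a coboundary, so $\pi(\Gamma_{a,b})\neq0$ and, by the first paragraph, $\pi(\Gamma_{a,b})$ generates $Ext^2(\Delta(h),D_{a+3}\otimes\Lambda^{b-3})$.

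The step I expect to be the main obstacle is not deep but is the bookkeeping one: verifying carefully that the $B_{2,1}$-component of $\Gamma_{a,b}$ is exactly $f_2^{-1}(\Gamma_{1,b-1})$ and that the $B_{2,1}$-block of the differential is, under $f_2$, plus or minus the full differential $Hom(\Theta_2(1,b-1),D_4\otimes\Lambda^{b-4})$ --- both facts are essentially contained in the proofs of Lemma~\ref{skewmatrix} and of the previous proposition, but require tracking several indices and the signs in the differential. The only genuinely computational ingredient is the base case, where the $3\times3$ matrix $e^{(2)}(\Delta(a,1^3),D_{a+3})$ must be produced from the recursion with the correct binomial coefficients and signs.
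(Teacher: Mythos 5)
Your argument is correct, but it proves the key point (nonvanishing of $\pi(\Gamma_{a,b})$) by a genuinely different route than the paper. The paper's proof is direct and non-inductive: it isolates the single basis element $B_{b-1,b-2}^{a,b}$, whose coefficient in $\Gamma_{a,b}$ is $-1$, and shows by a weight argument that only two standard basis elements of the domain can contribute to it under the differential, each with coefficient $3$; hence any coboundary has that coefficient divisible by $3$, so $\Gamma_{a,b}$ is not a coboundary. You instead induct on $b$: you project onto the $B_{2,1}$-block of Lemma \ref{skewmatrix}, use that this block of the differential is (via $f_2$) $-Hom(\Theta_2(1,b-1),D_4\otimes\Lambda^{b-4})$ and that the $B_{2,1}$-component of $\Gamma_{a,b}$ equals $f_2^{-1}(\Gamma_{1,b-1})$, and reduce to the pair $(1,b-1)$, with base case the explicit matrix $e^{(2)}(1,3,D_4)$ already computed in the proof of Lemma \ref{basiclemma}. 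The bookkeeping step you flag does close: comparing the definition of $\Gamma_{a,b}$ with equations (\ref{eq10})--(\ref{eq11}) gives $3\,\mathrm{proj}_{B_{2,1}}(\Gamma_{a,b})=f_2^{-1}(3\Gamma_{1,b-1})$, and one may divide by $3$ since these are free abelian groups; the zero blocks in row $B_{2,1}$ of Lemma \ref{skewmatrix} then give exactly the implication you need. Two immaterial slips: in the ordered basis $1^{(3)}2,\,1^{(2)}2^{(2)},\,12^{(3)}$ the vector representing $\Gamma_{1,3}$ is $(1,0,-1)^{t}$ rather than $(-1,0,1)^{t}$ (the membership argument is unaffected), and your general base-case claim that the torsion of the cokernel for $b=3$ is $\mathbb{Z}_{3/\gcd(3,a)}$ generated by $\pi(\Gamma_{a,3})$ is true but only the case $a=1$ is needed. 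Comparatively, the paper's one-row divisibility argument is shorter and self-contained at this point, while your recursive argument reuses the structural Lemma \ref{skewmatrix} in the same spirit as the proof of Proposition 4.1 and yields, as a byproduct, the case $b=3$ statement $Ext^2(\Delta(a,1^3),D_{a+3})=\mathbb{Z}_{3/\gcd(3,a)}$, which the paper's proof (needing $b\ge4$ for $B_{b-1,b-2}^{a,b}$ and for Lemma \ref{basiclemma}) does not address.
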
	
\begin{proof}We will first show that $\pi(\Gamma_{a,b})$ is nonzero. Indeed, if $\pi(\Gamma_{a,b})=0$, then $\pi(\Gamma_{a,b})$ would be equal to a linear combination of the relations in the extension group $Ext^2(\Delta(h),D_{a+3}\otimes\Lambda^{b-3})$ corresponding to the columns of the matrix $e^{(2)}(\Delta(h),D_{a+3}\otimes\Lambda^{b-3})$. This implies that the coefficient $-1$ of $\pi(B_{b-1,b-2})$ in $\pi(\Gamma_{a,b})$ should be a linear combination of the nonzero entries of the last row of $e^{(2)}(\Delta(h),D_{a+3}\otimes\Lambda^{b-3})$. These are the coefficients of $\pi(B_{b-1,b-2})$ resulting from the application of $\pi\circ\theta_{b-1}$ on the standard basis elements: $$1^{(a-1)} (b-2) {(b-1)}^{(2)} b\otimes 1\cdots(b-3), \, 1^{(a-1)}(b-2)(b-1)b^{(2)}\otimes 1\cdots(b-3).$$ Indeed, let $\lambda=(\lambda_1,\lambda_2,\dots,\lambda_b)$ be a sequence with $\lambda_1\in\{0,1\},\,\lambda_i\in\{1,2\},\, \sum_{i=1}^{b}\lambda_i$ $=b$, and a standard basis element $T_{\lambda}$ in $Hom(D_{a+\lambda_1}\otimes D_{\lambda_2}\otimes \cdots \otimes  D_{\lambda_b}, D_{a+3}\otimes\Lambda^{b-3})$, such that $\theta_{b-1}(T_{\lambda})=c\cdot B_{b-1,b-2}^{a,b}$ for some $c\in\mathbb{Z}$. Then $\theta_{b-1}(T_{\lambda})$ has weight $(1^{a+\lambda_1},2^{\lambda_2},\dots,{(b-1)}^{\lambda_{b-1}+\lambda_b})$ which must be equal to $(1^{a},2,\dots,(b-2),{(b-1)}^3)$, the weight of $B_{b-1,b-2}^{a,b}$. This implies that $\lambda_1=0$, $(\lambda_{b-1},\lambda_{b})\in\{(1,2),(2,1)\}$ and $$T_{\lambda}\in\{1^{(a-1)} (b-2) {(b-1)}^{(2)} b\otimes 1\cdots(b-3), 1^{(a-1)}(b-2)(b-1)b^{(2)}\otimes 1\cdots(b-3)\}.$$ By straightforward calculations, we have, 
	\begin{align*}
	&\pi(\theta_{b-1}(1^{(a-1)} (b-2) {(b-1)}^{(2)} b\otimes 1\cdots(b-3)))\\
	&=\pi(\theta_{b-1}(1^{(a-1)}(b-2)(b-1)b^{(2)}\otimes 1\cdots(b-3)))\\
	&=3 \pi(B_{b-1,b-2}^{a,b}),
	\end{align*} 
	which means that $-1$ should be a linear combination of $3$ and $3$, i.e. multiple of $3$. This implies that $\pi(\Gamma_{a,b})$ is nonzero. As $Ext^2(\Delta(h),D_{a+3}\otimes\Lambda^{b-3})= \mathbb{Z}_3$, $3\pi(\Gamma_{a,b})=0$ (by Proposition 4.1) and $\pi(\Gamma_{a,b})\neq 0$, we conclude that $\pi(\Gamma_{a,b})$ is a generator of $Ext^2(\Delta(h),D_{a+3}\otimes\Lambda^{b-3})$. \end{proof}
\subsection{Relations in $Ext^2(\Delta(h),D_{a+2}\otimes \Lambda^{b-2})$.}
In this Subsection we will prove certain relations in $E^2(\Delta(h),D_{a+2}\otimes \Lambda^{b-2})$ using the columns of $e^{(2)}(\Delta(h), D_{a+2}\otimes \Lambda^{b-2})$, i.e. using the equalities $\sum_{s=1}^{b-1}{(-1)}^{s-1}\pi(\theta_s(T))=0$, where $T$ is a standard basis element of the domain of the differential $\Theta_2 (a,b)$. These relations will be described in Lemmas 4.3 and 4.4. They will yield Proposition 4.5 which will be used in the proof of Proposition 4.6, in order to facilitate the computation of the image of the cyclic generator of Proposition 4.2. Proposition 4.6 will then yield the case $k=4$ of Theorem 1.1.

Let  \begin{align*}{b_1}^{(1)}&=1^{(a+2)} \otimes 2\cdots (b-1),\\{b_j}^{(1)}&=1^{(a+1)}j \otimes 1\cdots \hat{j} \cdots (b-1)\end{align*} 
be the indicated standard basis elements in $Hom(D_{a+2}\otimes {D_1}^{\otimes (b-2)}, D_{a+2} \otimes \Lambda^{b-2})$ and 
\begin{align*}{b_1}^{(i)}=&1^{(a)}i^{(2)} \otimes 2 \cdots i \cdots (b-1),\\{b_j}^{(i)}=&1^{(a-1)} i^{(2)}j \otimes 12\cdots \hat{j} \cdots (b-1)\end{align*} 
be the indicated standard basis elements in $Hom(D_a \otimes {D_1}^{\otimes (i-2)} \otimes D_3 \otimes {D_1}^{\otimes (b-i-1)},$ $ D_{a+2}\otimes \Lambda^{b-2})$ for $i\in\{2,\dots ,b-1\}$ and $j\in\{2,\dots,b-1\}$.
\begin{lm} \label{bl1} For every $i\in\{2,\dots,b-1\}$,
		\begin{equation*}
		a\pi(b_1^{(i+1)}) - \sum_{j=2}^{i}{(-1)}^j \pi(b_j^{(i+1)})= a\pi(b_1^{(i)})-\sum_{j=2}^{i-1}{(-1)}^{(j)}\pi(b_j^{(i)}) +3{(-1)}^{i+1}\pi(b_i^{(i)}).
		\end{equation*}
\end{lm}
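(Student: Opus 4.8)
The statement to prove is the relation, for each $i\in\{2,\dots,b-1\}$,
\[
a\pi(b_1^{(i+1)}) - \sum_{j=2}^{i}(-1)^j \pi(b_j^{(i+1)})= a\pi(b_1^{(i)})-\sum_{j=2}^{i-1}(-1)^{j}\pi(b_j^{(i)}) +3(-1)^{i+1}\pi(b_i^{(i)}),
\]
inside $E^2(\Delta(h),D_{a+2}\otimes\Lambda^{b-2})$. Since the columns of $e^{(2)}(\Delta(h),D_{a+2}\otimes\Lambda^{b-2})$ give exactly the relations $\sum_{s=1}^{b-1}(-1)^{s-1}\pi(\theta_s(T))=0$ for $T$ a standard basis element of the domain of $\Theta_2(a,b)$ (Remark 2.5), the whole proof is a matter of choosing the right $T$ and computing $\pi(\theta_s(T))$. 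I would pick the standard basis element
\[
T = 1^{(a)} i^{(2)} (i+1)\otimes 2\cdots \hat{i}\cdots(b-1)
\]
in $\operatorname{Hom}(D_a\otimes D_1^{\otimes(i-2)}\otimes D_2\otimes D_1^{\otimes(b-i)}, D_{a+2}\otimes\Lambda^{b-2})$ (the weight $(a,1,\dots,2,2,1,\dots,1)$ at positions $i$ and $i+1$ makes this land, after contraction, on elements of the $b_\bullet^{(\bullet)}$ family), and expand $\sum_s(-1)^{s-1}\theta_s(T)=0$.

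First I would record, for $s\ge 2$, what $\theta_s$ does: by definition $\theta_s(T)$ merges positions $s$ and $s+1$, i.e. replaces every entry $>s$ by itself minus one. For the chosen $T$ the only interesting values of $s$ are those near $i$; for generic $s$ one gets either $0$ (when two $\Lambda$-entries collide) or, after straightening in $D_{a+2}\otimes\Lambda^{b-2}$, a $\pm$-multiple of a $b_j^{(i)}$ or $b_j^{(i+1)}$. The term $\theta_1(T)$ contracts the leading $D_a$ with position $2$ and, after applying the comultiplication component, produces the $a\,(\cdot)$ terms and also, when the freed index coincides with a power, a binomial multiple; this is where the coefficient $3=\binom{3}{2}+\dots$ type phenomenon enters (compare the explicit computation $\pi(\theta_{b-1}(\cdots))=3\pi(B^{a,b}_{b-1,b-2})$ at the end of Subsection 4.1). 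The key bookkeeping is to match signs: a term $\theta_s(T)$ carrying the global sign $(-1)^{s-1}$ should reproduce the alternating sign $(-1)^j$ attached to $b_j^{(i)}$, and the index shift caused by the merge has to be tracked carefully. I would present the computation case by case ($s<i$, $s=i-1$, $s=i$, $s=i+1$, $s>i+1$), collecting like terms.

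The \textbf{main obstacle} is precisely the straightening: after applying $\theta_s$, the resulting tableau in $D_{a+2}\otimes\Lambda^{b-2}$ is frequently not standard (an entry appearing both as a divided power and in the exterior part, or out of order), and one must apply the straightening law of Subsection 2.2 to rewrite it in the standard basis. This is where the factor $3$ on the right-hand side is produced and where sign errors are easiest to make. I expect that after straightening, the terms $\theta_s(T)$ for $s\notin\{i-1,i,i+1\}$ cancel in pairs or vanish, the terms at $s=i-1,i+1$ contribute the two telescoping sums $-\sum_j(-1)^j b_j^{(i+1)}$ and $+\sum_j(-1)^j b_j^{(i)}$, and the term at $s=i$ together with $\theta_1(T)$ yields the diagonal terms $a(b_1^{(i+1)}-b_1^{(i)})$ and the anomalous $3(-1)^{i+1}b_i^{(i)}$. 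Assembling these and moving everything to one side gives the claimed identity; for the extreme values $i=2$ and $i=b-1$ one checks separately that the empty sums are handled correctly, which matches the conventions in the Remark preceding Proposition 4.1.
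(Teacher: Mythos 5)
Your overall framework---extracting the relation from columns of $e^{(2)}(\Delta(h),D_{a+2}\otimes\Lambda^{b-2})$, i.e.\ from identities $\sum_{s}(-1)^{s-1}\pi(\theta_s(T))=0$---is the right starting point, but there is a genuine gap: no \emph{single} standard basis element $T$ can yield the stated identity. By the definition of $\theta_s$ (just before Remark 2.5), $\theta_s(T)$ is the single tableau obtained by replacing every entry $>s$ by that entry minus one, hence a (possibly zero) integer multiple of \emph{one} standard basis element; it is never a sum over $j$, so the ``telescoping sums'' you expect from the terms at $s=i\pm1$ cannot arise from one column. Moreover, a weight count shows that any $T$ whose relation involves both weight spaces $(a,1^{i-2},3,1^{b-i-1})$ and $(a,1^{i-1},3,1^{b-i-2})$, where the $b_j^{(i)}$ and $b_j^{(i+1)}$ live, must itself have weight $(a,1^{i-1},2,1^{b-i-1})$, i.e.\ must be one of the elements $T_j^{(i+1)}$, $1\le j\le i$, of the paper; each such single relation contains, besides at most one $b$-term of each kind, extraneous terms of weight $(a+1,\dots)$ or with two entries equal to $2$, which are not among the $b$'s. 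The actual content of the proof is the choice of the integral combination $a\cdot(\text{relation of }T_1^{(i+1)})+\sum_{j=2}^{i}(-1)^j(\text{relation of }T_j^{(i+1)})$, in which these extraneous terms cancel in pairs (using $a\,\theta_1(T_1^{(i+1)})=\theta_1(T_2^{(i+1)})$ and $\theta_s(T_s^{(i+1)})=\theta_s(T_{s+1}^{(i+1)})$), leaving only the $\theta_i$- and $\theta_{i+1}$-terms, with $\theta_i(T_i^{(i+1)})=3\,b_i^{(i)}$ supplying the coefficient $3$. This combination, and the cancellation mechanism, are missing from your plan, and without them the argument does not go through even for $i=2$.

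Two further concrete problems. First, the element you pick, $1^{(a)}i^{(2)}(i+1)\otimes 2\cdots \hat{i}\cdots(b-1)$, is not an element of any weight space of $D_{a+2}\otimes\Lambda^{b-2}$: its divided-power part has degree $a+3$ and its exterior part has $b-3$ entries, and its weight does not match the Hom-space you assign it to; it looks like one of the tableaux of Subsection 4.1, which concern $D_{a+3}\otimes\Lambda^{b-3}$. Second, the ``main obstacle'' you identify, straightening inside $D_{a+2}\otimes\Lambda^{b-2}$, does not exist: this module is not a Weyl module, its weight spaces have evident monomial bases, and the only nontrivial coefficients come from divided-power collisions (e.g.\ $i^{(2)}\cdot i=3\,i^{(3)}$) or from a repeated exterior entry killing a term---indeed, avoiding the straightening law is precisely the reason the paper works with the skew modules rather than with $\Delta(h(k))$ directly.
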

\begin{proof} We consider the basis elements $T_1^{(i+1)}=1^{(a)}{(i+1)}^{(2)}\otimes 2\cdots \widehat{(i+1)} \cdots b$ and 
	$T_j^{(i+1)}=1^{(a-1)}j{(i+1)}^{(2)}\otimes 1\cdots \hat{j}\cdots \widehat{(i+1)}\cdots b$, for $j\in\{2,\dots,i\}$. Let \[D_{a,b}=a\sum_{s=1}^{b-1} {(-1)}^{s-1} \theta_s(T_1^{(i+1)})+\sum_{j=2}^i {(-1)}^j\sum_{s=1}^{b-1} {(-1)}^{s-1} \theta_s(T_j^{(i+1)}).\]
	We know that $\sum_{s=1}^{b-1} {(-1)}^{s-1}\pi( \theta_s(T_j^{(i+1)}))=0$, for $j\in\{1,\dots,i\}$, which implies that $\pi(D_{a,b})=0$.
	We observe that $\theta_s (T_j^{(i+1)})=0$ for every $j\in\{1,\dots,i\}$ and $s\in\{i+2,\dots,b-1\}$ because of the $(i+2)(i+3)\cdots b$ part $T_j^{(i+1)}$ in $\Lambda^{b-2}$. So we obtain 
	\begin{equation*} 
		a\sum_{s=i+2}^{b-1} {(-1)}^{s-1} \theta_s(T_1^{(i+1)})+\sum_{j=2}^i {(-1)}^j\sum_{s=i+2}^{b-1} {(-1)}^{s-1} \theta_s(T_j^{(i+1)})=0
	\end{equation*}
	and
	\begin{equation}\label{2e}
	D_{a,b}=a\sum_{s=1}^{i+1} {(-1)}^{s-1} \theta_s(T_1^{(i+1)})+\sum_{j=2}^i {(-1)}^j\sum_{s=1}^{i+1} {(-1)}^{s-1} \theta_s(T_j^{(i+1)}).
	\end{equation}
	By immediate calculations we have $\theta_s(T_1^{(i+1)})=0$ for every $s\in\{2,\dots,i-1\}$, $\theta_s(T_j^{(i+1)})=0$ for every $s\in\{1,\dots,j-2\}\cup\{j+1,\dots,i-1\}$, $j\in\{2,\dots,i-1\}$ and
	$\theta_s(T_i^{(i+1)})=0$ for every $s\in\{1,\dots,i-2\}$.
    Hence
    \begin{align} D_{a,b}=&a\sum_{s=1}^{i-1} {(-1)}^{s-1} \theta_s(T_1^{(i+1)})+\sum_{j=2}^i {(-1)}^j\sum_{s=1}^{i-1} {(-1)}^{s-1} \theta_s(T_j^{(i+1)})\nonumber \\
    &+a\sum_{s=i}^{i+1} {(-1)}^{s-1} \theta_s(T_1^{(i+1)})+\sum_{j=2}^i {(-1)}^j\sum_{s=i}^{i+1} {(-1)}^{s-1} \theta_s(T_j^{(i+1)})\nonumber \\
    =& (a\theta_1(T_1^{(i+1)})-\theta_1(T_2^{(i+1)})) + \sum_{s=2}^{i-1}(\theta_s(T_s^{(i+1)})-\theta_s(T_{s+1}^{(i+1)}))\nonumber \\
    &+a\sum_{s=i}^{i+1} {(-1)}^{s-1} \theta_s(T_1^{(i+1)})+\sum_{j=2}^i {(-1)}^j\sum_{s=i}^{i+1} {(-1)}^{s-1} \theta_s(T_j^{(i+1)})\nonumber\\
    =&a\sum_{s=i}^{i+1} {(-1)}^{s-1} \theta_s(T_1^{(i+1)})+\sum_{j=2}^i {(-1)}^j\sum_{s=i}^{i+1} {(-1)}^{s-1} \theta_s(T_j^{(i+1)}),
    \end{align}
 as $a\theta_1(T_1^{(i+1)})=\theta_1(T_2^{(i+1)})=a1^{(a)}i^{(2)}\otimes 1\cdots \hat{i} \cdots (b-1)$ and $\theta_s(T_s^{(i+1)})=\theta_s(T_{s+1}^{(i+1)})=1^{(a-1)}si^{(2)}\otimes 1\cdots\hat{i} \cdots (b-1)$. 
 
 It follows that
\begin{equation*}
\pi(D_{a,b})=a\sum_{s=i}^{i+1} {(-1)}^{s-1} \pi(\theta_s(T_1^{(i+1)}))+\sum_{j=2}^i {(-1)}^j\sum_{s=i}^{i+1} {(-1)}^{s-1}\pi(\theta_s(T_j^{(i+1)}))=0,
\end{equation*}
	which is equivalent to
	\begin{equation} \label{6e}
	a\pi(\theta_i(T_1^{(i+1)}))-\sum_{j=2}^{i}{(-1)}^j\pi(\theta_i(T_j^{(i+1)})) =
	a\pi(\theta_{i+1}(T_1^{(i+1)}))-\sum_{j=2}^{i}{(-1)}^j\pi(\theta_{i+1}(T_j^{(i+1)})).
	\end{equation}
	More straightforward calculations yield:
	\begin{itemize}
		\item $\theta_i(T_1^{(i+1)})=1^{(a)}i^{(2)}\otimes 2\cdots (b-1)=b_1^{(i)}$,
		\item $\theta_i(T_j^{(i+1)})=1^{(a-1)}ji^{(2)}\otimes 1\cdots \hat{j} \cdots (b-1)=b_j^{(i)}$ for every $j\in\{2,\dots ,i-1\}$,
		\item $\theta_i(T_i^{(i+1)})=3\cdot 1^{(a-1)}i^{(3)}\otimes 1\cdots \hat{i} \cdots (b-1)=3 \cdot b_i^{(i)}$
	\item $\theta_{i+1}(T_1^{(i+1)})=1^{(a)}{(i+1)}^{(2)}\otimes 2\cdots (b-1) = b_1^{(i+1)}$,
	\item 	$\theta_{i+1}(T_j^{(i+1)})=1^{(a-1)}j{(i+1)}^2\otimes 1 \cdots \hat{j} \cdots (b-1)=b_j^{(i+1)}$ for every \\
	$j\in\{2,\dots ,i\}$,
\end{itemize}
so (\ref{6e}) is equivalent to 
\begin{equation*}
a\pi(b_1^{(i+1)}) - \sum_{j=2}^{i}{(-1)}^j \pi(b_j^{(i+1)})= a\pi(b_1^{(i)})-\sum_{j=2}^{i-1}{(-1)}^{j}\pi(b_j^{(i)}) +3{(-1)}^{i+1}\pi(b_i^{(i)}).
\end{equation*}
\end{proof}
With the notation established before Lemma 4.3, we have the following statement.
\begin{lm} \label{1q1q1} For every $i=2,\dots ,b-2$, \[\sum_{j=i+2}^{b-1}{(-1)}^{j-i}\pi(b_j^{(i+1)})-3\pi(b_{i+1}^{(i+1)})=\sum_{j=i+1}^{b-1}{(-1)}^{j-i}\pi(b_j^{(i)}).\]
\end{lm}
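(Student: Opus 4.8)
The plan is to mimic the proof of Lemma \ref{bl1}: I would obtain the asserted relation by evaluating the column relations $\sum_{s=1}^{b-1}{(-1)}^{s-1}\pi(\theta_s(T))=0$ of the matrix $e^{(2)}(\Delta(h),D_{a+2}\otimes\Lambda^{b-2})$ on a well chosen finite family of standard basis elements $T$ and forming an alternating $\mathbb{Z}$-linear combination. The family I would use consists of the ``right-hand'' members of the summand of the domain that already appears in Lemma \ref{bl1}, namely
$$T_j=1^{(a-1)}{(i+1)}^{(2)}j\otimes 1\cdots\widehat{(i+1)}\cdots\hat{j}\cdots b,\qquad j\in\{i+2,\dots,b\},$$
all lying in $Hom(D_a\otimes{D_1}^{\otimes(i-1)}\otimes D_2\otimes{D_1}^{\otimes(b-i-1)},D_{a+2}\otimes\Lambda^{b-2})$, the only change from Lemma \ref{bl1} being that the distinguished index $j$ now sits to the right of ${(i+1)}^{(2)}$. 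Putting $E=\sum_{j=i+2}^{b}{(-1)}^{j}\sum_{s=1}^{b-1}{(-1)}^{s-1}\theta_s(T_j)$, we have $\pi(E)=0$, and the lemma amounts to an explicit simplification of $E$.

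The next step is to identify which $\theta_s(T_j)$ survive. By the same collision argument in the exterior tensor factor as in Lemma \ref{bl1}, $\theta_s(T_j)=0$ for every $s\in\{1,\dots,i-1\}\cup\{i+2,\dots,j-2\}\cup\{j+1,\dots,b-1\}$, so only $\theta_i,\theta_{i+1},\theta_{j-1},\theta_j$ can contribute to the $j$-th relation; at the endpoints $\theta_{j-1}$ coincides with $\theta_{i+1}$ when $j=i+2$, and $\theta_j=\theta_b$ is absent from $\sum_{s=1}^{b-1}{(-1)}^{s-1}\theta_s$ when $j=b$. A direct computation, using the divided power identity ${(i+1)}^{(2)}\cdot(i+1)=3\,{(i+1)}^{(3)}$, gives
$$\theta_i(T_j)=b_{j-1}^{(i)}\ (i+2\le j\le b),\qquad \theta_{i+1}(T_j)=\begin{cases}3\,b_{i+1}^{(i+1)}, & j=i+2,\\ b_{j-1}^{(i+1)}, & i+3\le j\le b,\end{cases}$$
while $\theta_{j-1}(T_j)$ and $\theta_j(T_j)$ are the ``double-$2$'' basis elements $1^{(a-1)}{(i+1)}^{(2)}(j-1)\otimes 12\cdots\widehat{(i+1)}\cdots(b-1)$ and $1^{(a-1)}{(i+1)}^{(2)}j\otimes 12\cdots\widehat{(i+1)}\cdots(b-1)$ respectively.

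The decisive point is that $\theta_j(T_j)=\theta_j(T_{j+1})$ for $i+2\le j\le b-1$, so that with the alternating coefficients ${(-1)}^{j}$ all the double-$2$ terms cancel in consecutive telescoping pairs (the only one without a partner would be $\theta_b(T_b)$, which does not occur). Consequently $\pi(E)=0$ collapses to
$${(-1)}^{i-1}\sum_{j=i+2}^{b}{(-1)}^{j}\pi(b_{j-1}^{(i)})+3\pi(b_{i+1}^{(i+1)})+{(-1)}^{i}\sum_{j=i+3}^{b}{(-1)}^{j}\pi(b_{j-1}^{(i+1)})=0,$$
and re-indexing $j\mapsto j-1$ and multiplying through by ${(-1)}^{i}$ turns this into exactly the stated identity. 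I expect the main obstacle to be the bookkeeping in this last stage rather than any conceptual difficulty: one must manage the two sign families ${(-1)}^{s-1}$ and ${(-1)}^{j}$ simultaneously, treat the degenerate boundary indices $j=i+2$ and $j=b$ correctly, and be careful with the divided power coefficient $3$ in $\theta_{i+1}(T_{i+2})$. The telescoping cancellation of the double-$2$ survivors is the heart of the matter; once it is established, what remains is a purely finite verification.
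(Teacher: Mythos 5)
Your proposal is correct and follows essentially the same route as the paper: the paper's proof uses exactly the same family $T_j^{(i+1)}=1^{(a-1)}(i+1)^{(2)}j\otimes 1\cdots\widehat{(i+1)}\cdots\hat{j}\cdots b$, the same alternating combination of column relations, the same vanishing pattern, the cancellation $\theta_j(T_j^{(i+1)})=\theta_j(T_{j+1}^{(i+1)})$, and the identifications $\theta_i(T_j^{(i+1)})=b_{j-1}^{(i)}$, $\theta_{i+1}(T_{i+2}^{(i+1)})=3b_{i+1}^{(i+1)}$, $\theta_{i+1}(T_j^{(i+1)})=b_{j-1}^{(i+1)}$. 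Only the final sign bookkeeping is described slightly loosely (the reindexing alone already yields the stated identity), which is immaterial.
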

\begin{proof} Consider the standard basis elements $$T_j^{(i+1)}=1^{(a-1)}(i+1)^{(2)}j\otimes 1\cdots \widehat{(i+1)} \cdots \hat{j} \cdots b$$ 
in $Hom(D_a \otimes {D_1}^{\otimes (i-1)}\otimes D_2 \otimes {D_1}^{\otimes (b-i-1)}, D_{a+2} \otimes \Lambda^{b-2})$ 
for $j\in\{i+2,\dots,b\}$. We know that $\sum_{s=1}^{b-1}{(-1)}^{s-1}\pi(\theta (T_j^{(i+1)}))=0$ for $j\in\{i+2,\dots,b\}$. This implies that
\begin{equation}\label{t1}
\sum_{j=i+2}^b {(-1)}^{j}\sum_{s=1}^{b-1}{(-1)}^{s-1}\pi(\theta_s(T_j^{(i+1)}))=0.
\end{equation}
But $\theta_s(T_j^{(i+1)})=0$ for every $s\in\{1,\dots , i-1\}$, for every $j\in\{i+2 ,\dots ,b\}$ because of the $1\cdots i$ part of $T_j^{(i+1)}$
in $\Lambda^{b-2}$. So (\ref{t1}) is equivalent to
\begin{equation}\label{t2}
\sum_{j=i+2}^b {(-1)}^{j}\sum_{s=i}^{i+1}{(-1)}^{s-1}\pi(\theta_s(T_j^{(i+1)}))+
\sum_{j=i+2}^b {(-1)}^{j}\sum_{s=i+2}^{b-1}{(-1)}^{s-1}\pi(\theta_s(T_j^{(i+1)}))=0.
\end{equation}
By straightforward calculations we have  
$\theta_s(T_{i+2}^{(i+1)})=0$ for every $s\in\{i+3,\dots,b-1\}$,
$\theta_s(T_j^{(i+1)})=0$ for every $s\in\{i+2,\dots ,j-2\}\cup \{j+1, \dots ,b-1\}$ for every $j\in\{i+3, \dots , b-1\}$ and $\theta_s(T_b^{(i+1)})=0$ for every $s\in\{i+2, \dots ,b-2\}$.
So (\ref{t2}) yields
\begin{align}\label{t3}
\sum_{j=i+2}^b {(-1)}^{j}\sum_{s=i}^{i+1}{(-1)}^{s-1}\pi(\theta_s(T_j^{(i+1)}))-
\sum_{j=i+2}^{b-1}(\pi(\theta_j(T_j^{(i+1)}))-\pi(\theta_j(T_{j+1}^{(i+1)})))=0.
\end{align}
We observe now that $\theta_j(T_j^{(i+1)})=\theta_j(T_{j+1}^{(i+1)})$, so (\ref{t3}) is equivalent to 
$$\sum_{j=i+2}^b {(-1)}^{j-i}(\pi(\theta_{i+1}(T_j^{(i+1)}))-\pi(\theta_i(T_j^{(i+1)})))=0,$$
or
\begin{equation}\label{t6}
\sum_{j=i+2}^{b-1}{(-1)}^{j-i} \pi(\theta_i(T_j^{(i+1)}))=
\sum_{j=i+3}^{b-1}{(-1)}^{j-i} \pi(\theta_{i+1}(T_j^{(i+1)})) + \pi(\theta_{i+1}(T_{i+2})),
\end{equation}
where 
\begin{itemize}
\item $\theta_i(T_j^{(i+1)})=b_{j-1}^{(i)}$ for every $j\in\{i+2,\dots ,b\}$,
\item $\theta_{i+1}(T_{i+2})= 3\cdot b_{i+1}^{(i+1)}$ and $\theta_{i+1}(T_j^{(i+1)})=b_{j-1}^{(i+1)}$ for every $j\in\{i+3, \dots ,b\}$.
\end{itemize}
\noindent Equation (\ref{t6}) is now equivalent to
\begin{equation*}
\sum_{j=i+2}^{b-1}{(-1)}^{j-i}\pi(b_j^{(i+1)})-3\pi(b_{i+1}^{(i+1)})=\sum_{j=i+1}^{b-1}{(-1)}^{j-i}\pi(b_j^{(i)})
\end{equation*}
\noindent for every $i\in\{2,\dots ,b-2\}$. \end{proof}
We are now ready to prove the main result of the present subsection, which provides usefull relations that will be used in Subsection 4.3.
\begin{prop} \label{l1} $\;$ \begin{enumerate}
			\item[{\rm (a)}] For $i=1$, \[\sum_{j=2}^{b-1} (-1)^j \pi(b_j^{(1)})= (a+2)\pi(b_1^{(1)}).\]
	       \item[{\rm (b)}] For every $i\in\{2,\dots, b-1\}$, \[a(-1)^i \pi(b_1^{(i)})+ (-1)^{i}\sum_{j=2}^{i-1} (-1)^{j-1} \pi(b_j^{(i)})+{(-1)}^i \sum_{j=i+1}^{b-1} (-1)^{j-1} \pi(b_j^{(i)}) = 3\pi(b_i^{(i)}).\]
		 \end{enumerate} \end{prop}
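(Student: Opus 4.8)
The plan is to derive both statements by combining Lemmas~\ref{bl1} and \ref{1q1q1}, which already encode the two ``halves'' of the relations coming from the columns of $e^{(2)}(\Delta(h),D_{a+2}\otimes\Lambda^{b-2})$. Part~(a) should be the easy case: it is the single relation coming from the column indexed by the standard basis element $1^{(a)}2^{(2)}\otimes 2\cdots b$ (equivalently the element $T$ of weight forcing an extra $1$), i.e. the equation $\sum_{s\geq 1}(-1)^{s-1}\pi(\theta_s(T))=0$ for an appropriate $T$ in the domain of $\Theta_2(a,b)$. I would simply identify which $\theta_s(T)$ are zero (all but $s=1$ and possibly one other, by the same weight/$\Lambda$-part vanishing arguments used in the two lemmas), compute the two surviving terms explicitly as $(a+2)\pi(b_1^{(1)})$ and $\sum_{j=2}^{b-1}(-1)^j\pi(b_j^{(1)})$, and read off the identity. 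This is the $i=1$ analogue of the computations in Lemma~\ref{bl1}.

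For part~(b), the idea is to add the two relations of Lemmas~\ref{bl1} and \ref{1q1q1} for the same index $i$ so that the ``$(i)$-level'' terms telescope. Write $S^{(i)} := a\pi(b_1^{(i)}) - \sum_{j=2}^{i-1}(-1)^j\pi(b_j^{(i)})$ for the ``lower'' partial sum at level $i$ and $L^{(i)} := \sum_{j=i+1}^{b-1}(-1)^{j-i}\pi(b_j^{(i)})$ for the ``upper'' one. Lemma~\ref{bl1} reads $S^{(i+1)} = S^{(i)} + 3(-1)^{i+1}\pi(b_i^{(i)})$ after correctly bookkeeping the $j=i$ term on the left-hand side; Lemma~\ref{1q1q1} reads $L^{(i+1)} - 3\pi(b_{i+1}^{(i+1)}) + (\text{the }j=i+1\text{ correction}) = L^{(i)}$ again up to the boundary term. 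The plan is to set up a combined quantity $S^{(i)} \pm L^{(i)}$ (with the sign chosen to match the $(-1)^{j-1}$ conventions of the proposition statement) and show by induction on $i$, starting from the base case $i=2$ where the telescoped lower-level contribution is governed by part~(a), that $(-1)^i(S^{(i)} + L^{(i)})$ equals exactly $3\pi(b_i^{(i)})$. Concretely: induct on $i$; at step $i\to i+1$ feed Lemma~\ref{bl1} to rewrite $S^{(i+1)}$ in terms of $S^{(i)}$ and feed Lemma~\ref{1q1q1} to rewrite $L^{(i+1)}$ in terms of $L^{(i)}$, use the inductive hypothesis to replace $S^{(i)}+L^{(i)}$, and cancel; the coefficient $3$ persists because both lemmas produce the same factor $3$ (coming ultimately from the binomial $\binom{3}{1}=3$ appearing in $\theta_i(T_i^{(i+1)}) = 3\,b_i^{(i)}$).

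The main obstacle I anticipate is purely bookkeeping: matching the several sign conventions — the $(-1)^j$ versus $(-1)^{j-1}$ versus $(-1)^{j-i}$ weights in the statements of the two lemmas and of Proposition~\ref{l1}(b) — and correctly handling the boundary terms $j=i$ (which sits between the ``lower'' and ``upper'' sums) and $j=i+1$ (the new term that appears one level up). Getting the base case $i=2$ right is where part~(a) is needed: there the lower sum $\sum_{j=2}^{i-1}$ is empty, so the $i=2$ instance of~(b) must be checked directly against Lemmas~\ref{bl1} and \ref{1q1q1} with $i=2$ together with the $i=1$ relation of part~(a) supplying the value of $a\pi(b_1^{(2)})$ in terms of the $b_j^{(1)}$'s via Lemma~\ref{bl1} for $i=1$. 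Once the signs and endpoints are pinned down, the induction itself is a one-line cancellation.
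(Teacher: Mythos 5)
Your inductive step for part (b) is essentially the paper's own argument: assuming the identity at level $i$, Lemma \ref{bl1} rewrites the ``lower'' partial sum at level $i+1$, and adding the relation of Lemma \ref{1q1q1} yields the identity at level $i+1$, the factor $3$ coming from $\theta_i(T_i^{(i+1)})=3\,b_i^{(i)}$. But there are two genuine gaps. First, part (a) cannot come from a single column of $e^{(2)}(\Delta(h),D_{a+2}\otimes\Lambda^{b-2})$. In a relation $\sum_{s}(-1)^{s-1}\pi(\theta_s(T))=0$ each $\theta_s(T)$ is zero or an integer multiple of one standard basis element, never an alternating sum of $b-2$ distinct ones; moreover a weight count shows that at most one term of any such relation lies in the weight space $(a+2,1^{b-2})$ containing the $b_j^{(1)}$: the domain weights are $(a+1,1^{b-1})$ or $(a,1,\dots,2,\dots,1)$, and only the merge at $s=1$ can produce first entry $a+2$. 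So the identity in (a), which involves all $b-1$ elements $b_j^{(1)}$, is not one relation; one must combine the $b-1$ relations coming from $T_j^{(1)}=1^{(a+1)}j\otimes 2\cdots\hat{j}\cdots b$, $j=2,\dots,b$, with alternating signs and let the terms outside that weight space cancel in pairs via $\theta_s(T_s^{(1)})=\theta_s(T_{s+1}^{(1)})$ --- this telescoping is the actual content of the proof of (a). (Also the element you name, $1^{(a)}2^{(2)}\otimes 2\cdots b$, has total degree $a+b+1$ and is not in the domain.)

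Second, the base case $i=2$ of (b) is not covered by what you cite. Lemmas \ref{bl1} and \ref{1q1q1} only relate the level-$i$ elements $b_j^{(i)}$ to the level-$(i+1)$ elements $b_j^{(i+1)}$ and are stated only for $i\ge 2$, so together with (a) (a statement about level $1$) they cannot produce the purely level-$2$ identity $a\pi(b_1^{(2)})+\sum_{j=3}^{b-1}(-1)^{j-1}\pi(b_j^{(2)})=3\pi(b_2^{(2)})$. The ``Lemma \ref{bl1} for $i=1$'' you invoke is not available, and its natural analogue changes form, because merging into the first slot introduces binomial coefficients: for instance $\theta_1(T_j^{(2)})=\binom{a+1}{2}b_{j-1}^{(1)}$ and $\pi(b_1^{(2)})=\binom{a+2}{2}\pi(b_1^{(1)})$, rather than coefficient $1$ as in the $i\ge2$ computations. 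The paper settles the base case by a separate direct computation with the relations coming from $T_1^{(2)}=1^{(a)}2^{(2)}\otimes 3\cdots b$ and $T_j^{(2)}$, $j=3,\dots,b$, into which part (a) is fed; without this (or an equivalent, verified $i=1$ bridge) your induction has no starting point.
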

\begin{proof} For $i=1$ we consider the standard basis elements $T_j^{(1)}=1^{(a+1)}j\otimes 2\cdots\hat{j} \cdots b$ of $Hom(D_{a+1}\otimes {D_1}^{\otimes (b-1)}, D_{a+2} \otimes \Lambda^{b-2})$ for $j\in\{2,\dots , b\}$.
Then we have the relations $\sum_{s=1}^{b-1} (-1)^{s-1}\pi(\theta_s (T_j^{(1)}))=0$, for every $j\in\{2,\dots ,b\}$, and so
\begin{equation} \label{e1}
\sum_{j=2}^b (-1)^{j-1} \pi(\theta_1 (T_j^{(1)})) + \sum_{s=2}^{b-1} (-1)^{s-1} \sum_{j=2}^b (-1)^{j-1}\pi(\theta_s (T_j^{(1)})) =0.
\end{equation}
But $\theta_2 (T_j^{(1)})=0$ for $j\geq 4$, $\theta_s (T_j^{(1)})=0$ for
$j \in \{2,\dots,s-1\}\cup \{s+1,\dots b\}$, and $\theta_{b-1}(T_j^{(1)})=0$ for $j\leq b-2$. So $(\ref{e1})$ is equivalent to 
\begin{equation} \label{e2}
\sum_{j=2}^b (-1)^{j-1} \pi(\theta_1 (T_j^{(1)})) + \sum_{s=2}^{b-1} (\pi(\theta_s (T_s^{(1)})) -\pi(\theta_s (T_{s+1}^{(1)}))) =0.
\end{equation}
We observe now that $\theta_s (T_s^{(1)}) = \theta_s (T_{s+1}^{(1)}) = 1^{(a+1)}s \otimes 2\cdots (b-1)$ and $\theta_1 (T_2^{(1)})= (a+2) b_1 ^{(1)}$ and $\theta_1(T_j^{(1)})=b_{j-1}^{(1)}$ for $j\in\{3,\dots , b\}$, so $(\ref{e2})$ is equivalent to 
\begin{equation*}
\sum_{j=2}^{b-1} (-1)^{j} \pi(b_j^{(1)}) = (a+2)\pi(b_1^{(1)}).
\end{equation*}
For $i=2$, we consider the standard basis elements $T_1^{(2)}=1^{(a)}2^{(2)}\otimes 3\dots b$ and $T_j^{(2)}=1^{(a-1)}2j\otimes 1\hat{2}\dots \hat{j}\dots b$ in $Hom(D_a \otimes D_2 \otimes D_1 \otimes \dots \otimes D_1,D_{a+2}\otimes \Lambda^{b-2})$, for $j\in\{3,\dots ,b\}$. Then $\sum_{s=1}^{b-1} {(-1)}^{s-1}\pi(\theta_s (T_1^{(2)}))=0$ and $\sum_{s=1}^{b-1}{(-1)}^{s-1} \pi(\theta_s (T_j^{(2)}))=0$ for $j\in\{3,\dots ,b\}$. So we obtain
\begin{equation*} \label{e3} \sum_{j=3}^{b}{(-1)}^{j-1}\sum_{s=1}^{b-1}{(-1)}^{s-1} \pi(\theta_s (T_j^{(2)}))=0,
\end{equation*}
which is equivalent to
\begin{align}\label{e7}
&\sum_{j=3}^{b} {(-1)}^{j-1} \pi(\theta_1 (T_j^{(2)})) - \sum_{j=3}^{b} {(-1)}^{j-1} \pi(\theta_2 (T_j^{(2)})) \nonumber \\
&+ \sum_{s=3}^{b-1}{(-1)}^{s-1} \sum_{j=3}^{b}{(-1)}^{j-1}\pi(\theta_s (T_j^{(2)}))=0.
\end{align}
But $\theta_s (T_3^{(2)})=0$ for every $s\in\{4,\dots,b-1\}$, $\theta_s(T_b^{(2)})=0$ for every $s\in\{3,\dots b-2\}$, and $\theta_s (T_j^{(2)})=0$ for $s\in \{3,\dots,j-2\}\cup \{j+1,\dots,b-1\}$ for every $j\in\{4, \dots, b-1\}$ and also we have $\theta_s(T_s^{(2)})=\theta_s (T_{s+1}^{(2)})=1^{(a-1)}2^{(2)}s\otimes 1\hat{2}\dots \hat{s}\dots (b-1)$. As a result, $(\ref{e7})$ is equivalent to
\begin{equation} \label{e9}
\sum_{j=3}^{b} {(-1)}^{j-1} \pi(\theta_1 (T_j^{(2)})) - \sum_{j=3}^{b} {(-1)}^{j-1} \pi(\theta_2 (T_j^{(2)})) =0.
\end{equation}
But $\theta_1(T_j^{(2)})= \tbinom{a+1}{2}b_{j-1}^{(1)}$, for $j=3,\dots,b$, and thus
\begin{align}\label{e10}
&\sum_{j=3}^{b} {(-1)}^{j-1} \pi(\theta_1 (T_j^{(2)}))=\sum_{j=3}^{b} {(-1)}^{j-1}\tbinom{a+1}{2}\pi(b_{j-1}^{(1)})
= \tbinom{a+1}{2}\sum_{j=2}^{b-1}{(-1)}^j \pi(b_j^{(1)}) \nonumber \\
&=\tbinom{a+1}{2}(a+2)\pi(b_1^{(1)})=a\tbinom{a+2}{2}\pi(b_1^{(1)}),
\end{align}
where we used part (a) of the Proposition 4.5 in the second equality. We observe that $\theta_2(T_3^{(2)})=3b_2^{(2)}$ and $\theta_2(T_j^{(2)})=b_{j-1}^{(2)}$ for $j\in\{4,\dots,b\}$, and so 
\begin{equation}\label{e11}
 \sum_{j=3}^{b} {(-1)}^{j-1} \pi(\theta_2 (T_j^{(2)})) = 3\pi(b_2^{(2)}) + \sum_{j=4}^{b} {(-1)}^{j-1} \pi(b_{j-1}^{(2)}) =3\pi(b_2^{(2)}) + \sum_{j=3}^{b-1} {(-1)}^{j} \pi(b_{j}^{(2)}).
\end{equation}
Combining $(\ref{e9})$, $(\ref{e10})$ and $(\ref{e11})$ we obtain 
\begin{equation}\label{e12}
a\tbinom{a+2}{2}\pi(b_1^{(1)})+\sum_{j=3}^{b-1} {(-1)}^{j-1} \pi(b_{j}^{(2)})=3\pi(b_2^{(2)}).
\end{equation}
But $\sum_{s=1}^{b-1} {(-1)}^{s-1}\pi(\theta_s (T_1^{(2)}))=0$ and $\theta_s(T_1^{(2)})=0$ for every $s\in\{3,\dots,b-1\}$, because of the $3\cdots b$ part of $T_1^{(2)}$ in $\Lambda^{b-2}$, which implies that $\pi(\theta_1(T_1^{(2)}))=\pi(\theta_2(T_1^{(2)}))$ or equivalently $\tbinom{a+2}{2}\pi(b_1^{(1)})=\pi(b_1^{(2)}).$ Equation $(\ref{e12})$ is now equivalent to 
\begin{equation*}
a\pi(b_1^{(2)})+\sum_{j=3}^{b-1} {(-1)}^{j-1} \pi(b_{j}^{(2)})=3\pi(b_2^{(2)}),
\end{equation*}
and we have proved the case $i=2$. 

The general case for $i\geq 2$ will be proved by induction on $i$. Suppose that for $i\geq 2$ we have 
\begin{equation}
\label{inductive}
 a(-1)^i \pi(b_1^{(i)})+ (-1)^{i}\sum_{j=2}^{i-1} (-1)^{j-1} \pi(b_j^{(i)})+ {(-1)}^i\sum_{j=i+1}^{b-1} (-1)^{j-1} \pi(b_j^{(i)}) = 3\pi(b_i^{(i)}).
\end{equation}
Using (\ref{inductive}), Lemma \ref{bl1} yields
\begin{align}\label{1q1q}
a&(-1)^{i+1} \pi(b_1^{(i+1)})+ (-1)^{i+1}\sum_{j=2}^{i} (-1)^{j-1} \pi(b_j^{(i+1)}) \nonumber \\
&=a(-1)^{i+1} \pi(b_1^{(i)})+ (-1)^{i+1}\sum_{j=2}^{i-1} (-1)^{j-1} \pi(b_j^{(i)}) +3\pi(b_i^{(i)}) \nonumber \\
&=-\big(a(-1)^{i} \pi(b_1^{(i)})+ (-1)^{i}\sum_{j=2}^{i-1} (-1)^{j-1} \pi(b_j^{(i)})\big)+3\pi(b_i^{(i)}) \nonumber \\
&={(-1)}^i \sum_{j=i+1}^{b-1}{(-1)}^{j-1}\pi(b_j^{(i)}).
\end{align}
By Lemma \ref{1q1q1} we have 
\begin{equation}\label{1q1q1q}
\sum_{j=i+2}^{b-1}{(-1)}^{j-i}\pi(b_j^{(i+1)})=3\pi(b_{i+1}^{(i+1)})+\sum_{j=i+1}^{b-1}{(-1)}^{j-i}\pi(b_j^{(i)}),
\end{equation}
\noindent and adding (\ref{1q1q}) and (\ref{1q1q1q}) we obtain 
\begin{equation*}
a(-1)^{i+1} \pi(b_1^{(i+1)})+ (-1)^{i+1}\sum_{j=2}^{i} (-1)^{j-1} \pi(b_j^{(i+1)})+\sum_{j=i+2}^{b-1}{(-1)}^{j-i}\pi(b_j^{(i+1)})=
\end{equation*}
\begin{equation*}
{(-1)}^i\sum_{j=i+1}^{b-1}{(-1)}^{j-1}\pi(b_j^{(i)})+3\pi(b_{i+1}^{(i+1)})+\sum_{j=i+1}^{b-1}{(-1)}^{j-i}\pi(b_j^{(i)})=3\pi(b_{i+1}^{(i+1)}). 
\end{equation*} \end{proof}
\subsection{The image of the generator $\pi(\Gamma_{a,b})$}
By straightforward calculations, one verifies that the composition $i_2 \circ \pi_3: D_{a+3}\otimes \Lambda^{b-3}\rightarrow \Delta(h(3))\rightarrow D_{a+2}\otimes\Lambda^{b-2}$ is the map $f:D_{a+3}\otimes \Lambda^{b-3}\xrightarrow{\triangle \otimes 1} D_{a+2}\otimes D_1 \otimes \Lambda^{b-3}\xrightarrow{1 \otimes m}D_{a+2}\otimes \Lambda^{b-2}$.

Let $\phi: Ext^2(\Delta(h),D_{a+3}\otimes\Lambda^{b-3})\rightarrow Ext^2(\Delta(h),D_{a+2}\otimes\Lambda^{b-2})$ be the map induced by $i_2\circ \pi_3: D_{a+3}\otimes \Lambda^{b-3}\rightarrow  D_{a+2}\otimes \Lambda^{b-2}$, i.e. $\phi$ is the composition $i_2^{(2)}\circ \pi_3^{(2)}$,
\begin{align*}
\phi:&Ext^2(\Delta(h),D_{a+3}\otimes\Lambda^{b-3})\xrightarrow{\pi_3^{(2)}}Ext^2(\Delta(h),\Delta(h(3)))\\
&\xrightarrow{i_2^{(2)}}Ext^2(\Delta(h),D_{a+2}\otimes\Lambda^{b-2}).
\end{align*}
In this Subsection we will compute the image $\phi(\pi(\Gamma_{a,b}))$ of the cyclic generator $\pi(\Gamma_{a,b})$ that was defined in Proposition 4.1. More precisely, and using the notation established at the beginning of Subsection 4.2, we will prove the following. \begin{prop}We have \[\phi(\pi(\Gamma_{a,b}))=(a+b)\pi(\gamma_{a,b}),\] where $\gamma_{a,b}=\tbinom{a+2}{3} b_1^{(1)}+\sum_{i=2}^{b-1}{(-1)}^{i-1}b_i^{(i)}$ and $\pi(\gamma_{a,b})$ is a cyclic generator of $Ext^2(\Delta(h),D_{a+2}\otimes \Lambda^{b-2})$. 
\end{prop}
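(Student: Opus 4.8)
The plan is to compute $\phi(\pi(\Gamma_{a,b}))$ directly, by pushing the explicit cycle $\Gamma_{a,b}$ through the chain-level description of the map $i_2 \circ \pi_3 = f$ and then simplifying the resulting element of $Hom(P_2(a,b), D_{a+2}\otimes\Lambda^{b-2})$ modulo the relations provided by Proposition \ref{l1}. Concretely, $\phi$ is induced on $Ext^2$ by the module map $f\colon D_{a+3}\otimes\Lambda^{b-3}\to D_{a+2}\otimes\Lambda^{b-2}$, so $\phi(\pi(\Gamma_{a,b})) = \pi\big(f_*(\Gamma_{a,b})\big)$, where $f_*$ denotes postcomposition with $f$ on each homomorphism space $Hom(P_1(a+1,b-1)\oplus\cdots, -)$ appearing in $P_2(a,b)$. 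The first step is therefore to apply $f$ to each of the standard basis elements $B_{1,j}^{a,b}$, $B_{i,1}^{a,b}$, $B_{i,j}^{a,b}$ that occur in $\Gamma_{a,b}$: since $f = (1\otimes m)\circ(\triangle\otimes 1)$, applying $f$ to a generator $x\otimes y_1\cdots y_{b-3}$ of $D_{a+3}\otimes\Lambda^{b-3}$ splits off one divided-power factor and wedges it into the $\Lambda$ part, producing a signed sum of elements of $D_{a+2}\otimes\Lambda^{b-2}$. I would tabulate these images in terms of the basis elements $b_1^{(1)}, b_j^{(1)}, b_1^{(i)}, b_j^{(i)}$ of Subsection 4.2, keeping careful track of signs and of the binomial coefficients that arise from $\triangle\colon D_{a+3}\to D_{a+2}\otimes D_1$ and from re-expanding $D_3\to D_2\otimes D_1$ inside the $B_{i,1}$ and $B_{i,j}$ terms.

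Once $f_*(\Gamma_{a,b})$ is written as an explicit $\mathbb{Z}$-linear combination of the $b$-elements, the second step is to reduce it modulo the columns of $e^{(2)}(\Delta(h), D_{a+2}\otimes\Lambda^{b-2})$, i.e. to rewrite $\pi(f_*(\Gamma_{a,b}))$ using the relations of Proposition \ref{l1}(a) and (b). Proposition \ref{l1}(a) lets one eliminate all the $b_j^{(1)}$ in favor of $b_1^{(1)}$, and Proposition \ref{l1}(b), used inductively on $i$, lets one eliminate every $b_j^{(i)}$ with $j\neq i$ in favor of $b_i^{(i)}$ and lower-index terms; iterating this collapses the whole expression onto the two-term "diagonal" combination $\tbinom{a+2}{3}b_1^{(1)} + \sum_{i=2}^{b-1}(-1)^{i-1}b_i^{(i)} = \gamma_{a,b}$, up to an overall integer multiple. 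Bookkeeping of that multiple — tracking how the factors of $a$, $a+2$, $\tbinom{a+2}{2}$, $\tbinom{a+2}{3}$ and the alternating signs combine as the relations are applied — should yield the coefficient $a+b$. It remains to note that $\pi(\gamma_{a,b})$ generates $Ext^2(\Delta(h),D_{a+2}\otimes\Lambda^{b-2})$: this is precisely the generator exhibited by \cite{MS} in its Subsection 4.1, so I would either cite that directly or, in parallel with the argument of Proposition \ref{skew} er\ldots Proposition 4.2 above, verify nonvanishing by inspecting the coefficient of $\pi(b_{b-1}^{(b-1)})$ against the last row of the presentation matrix.

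The main obstacle I expect is the bookkeeping in the reduction step: $f_*(\Gamma_{a,b})$ is a triple sum whose terms, after applying $f$, land in overlapping families of $b$-elements, and the relations of Proposition \ref{l1}(b) are themselves recursive in $i$, so the collapse to $\gamma_{a,b}$ requires a careful induction on $b$ (or on $i$) that mirrors the inductive structure already used in the proof of Proposition 4.1. In particular, one must check that all the "off-diagonal" contributions cancel exactly — not merely modulo $3$ — so that the final coefficient is the honest integer $a+b$ rather than $a+b$ plus an unknown multiple of $3$; getting the constant exactly right is what makes the $k=4$ case of Theorem 1.1 come out, since $\phi$ is then multiplication by $a+b \bmod 3$ on $\mathbb{Z}_3$, which is zero iff $3\mid a+b$. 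A secondary, more routine difficulty is simply organizing the sign conventions consistently between the $\theta_s$ notation of Remark 2.5, the block structure of Lemma \ref{skewmatrix}, and the explicit form of $f$, so that no stray $-1$ creeps in.
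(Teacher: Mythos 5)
Your proposal is correct and is essentially the paper's own argument: the paper likewise computes $\phi(\pi(\Gamma_{a,b}))$ by applying the chain-level map $f=(1\otimes m)\circ(\triangle\otimes 1)$ to each tableau $B_{1,j}^{a,b}$, $B_{i,j}^{a,b}$ (each image straightens to two $b$-terms), then collapses the resulting combination using Proposition 4.5(a) and (b) to get the coefficient $a+b$, citing \cite{MS}, Lemma 4.4, for $\pi(\gamma_{a,b})$ being a cyclic generator. The only difference is that the paper needs no separate induction at this stage (Proposition 4.5(b) applies directly to each grouped bracket, giving $(a+b-3)+3=a+b$), and the worry about determining the coefficient as an exact integer rather than mod $3$ is immaterial since $\pi(\gamma_{a,b})$ has order $3$.
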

 \begin{proof}We recall from \cite{MS}, Lemma 4.4, that $\pi(\gamma_{a,b})$ is a cyclic generator of the group $Ext^2(\Delta(h),D_{a+2}\otimes \Lambda^{b-2}).$ 

As $\phi$ is a linear map, we obtain 
\begin{align*}
\phi(\pi(\Gamma_{a,b}))=&\sum_{i=3}^{b-2}{(-1)}^{i-1}\Big(a\phi(\pi(B_{i,1}^{a,b})) -\sum_{j=2}^{i-1} {(-1)}^j \phi(\pi(B_{i,j}^{a,b}))+\sum_{j=i+1}^{b-1} {(-1)}^{j} \phi(\pi(B_{i,j}^{a,b}))  \Big)\\
&+\tbinom{a+2} {3}\sum_{j=2}^{b-1}{(-1)}^{j}\phi(\pi(B_{1,j}^{a,b})),
\end{align*}
where empty sums are equal to zero. Using the straightening law we have
\begin{align*}
\phi(\pi(B_{1,j}^{a,b}))&={(-1)}^j\cdot \pi(1^{(a+2)}\otimes 2\dots (b-1)) + \pi(1^{(a+1)}j \otimes 1\dots \hat{j} \dots (b-1))\\&={(-1)}^j \pi(b_1^{(1)})+\pi(b_j^{(1)}).
\end{align*}
This implies that 
\begin{align}\label{eqgen1}
&\tbinom{a+2}{3}\sum_{j=2}^{b-1}{(-1)}^{j}\phi(\pi(B_{1,j}^{a,b}))=\tbinom{a+2}{3}\sum_{j=2}^{b-1} \Big(\pi( b_1^{(1)})+ {(-1)}^j\pi( b_j^{(1)})\Big)\nonumber \\
&=\tbinom{a+2}{3}\Big((b-2)\pi(b_1^{(1)}) + \sum_{j=2}^{b-1} {(-1)}^j \pi(b_j^{(1)})\Big)=\tbinom{a+2}{3}(a+b)\pi(b_1^{(1)}),
\end{align}
as $ \sum_{j=2}^{b-1} {(-1)}^j \pi(b_j^{(1)})=(a+2)\pi(b_1^{(1)})$, by part (a) of Proposition \ref{l1}.
Using again the straightening law, we observe that
\begin{itemize} 
\item $\phi(\pi(B_{i,j}^{a,b}))={(-1)}^{j-1}\pi(b_i^{(i)})+{(-1)}^i \pi(b_j^{(i)})$ for $1\leq j<i\leq b-1$, and
\item $\phi(\pi(B_{i,j}^{a,b}))={(-1)}^j \pi(b_i^{(i)})+{(-1)}^{i-1} \pi(b_j^{(i)})$ for $2\leq i<j\leq b-1$.
\end{itemize}
Then, by Proposition \ref{l1} for $i\in\{2,\dots,b-1\}$ and (\ref{eqgen1}), we have
\begin{align*}
\phi (\pi(\Gamma_{a,b}))&=\sum_{i=2}^{b-1}{(-1)}^{i-1}\Big(a\phi(\pi(B_{i,1}^{a,b})) -\sum_{j=2}^{i-1} {(-1)}^j \phi(\pi(B_{i,j}^{a,b}))+\sum_{j=i+1}^{b-1} {(-1)}^{j} \phi(\pi(B_{i,j}^{a,b}))  \Big)\\
&\,\,\,\,\,\,+\tbinom{a+2}{3}(a+b)\pi(b_1^{(1)}) \\
&=\sum_{i=2}^{b-1}{(-1)}^{i-1}\Big((a+b-3)\pi(b_i^{(i)}) + a{(-1)}^i\pi(b_1^{(i)})+{(-1)}^i\sum_{j=2}^{i-1}{(-1)}^{j-1}\pi(b_j^{(i)})\\
&\,\,\,\,\,\,+{(-1)}^i\sum_{j=i+1}^{b-1}{(-1)}^{j-1}\pi(b_j^{(i)})\Big)+\tbinom{a+2}{3}(a+b)\pi(b_1^{(1)})\\
&=(a+b)\Big(\tbinom{a+2}{3} \pi(b_1^{(1)})+\sum_{i=2}^{b-1}{(-1)}^{i-1}\pi(b_i^{(i)})\Big)=(a+b)\pi(\gamma_{a,b}),
\end{align*}
\end{proof}

Using Proposition 4.6, we will now prove the case $k=4$ of Theorem 1.1.
\begin{prop}
We have	$Ext^2(\Delta(h),\Delta(h(4)))=\mathbb{Z}_{gcd(3,a+b)}. $
\end{prop}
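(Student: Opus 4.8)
The plan is to extract $Ext^2(\Delta(h),\Delta(h(4)))$ from the exact sequence
\begin{equation*}
0\longrightarrow Ext^2(\Delta(h),\Delta(h(4)))\xrightarrow{i_3^{(2)}}Ext^2(\Delta(h),D_{a+3}\otimes\Lambda^{b-3})\xrightarrow{\pi_3^{(2)}}Ext^2(\Delta(h),\Delta(h(3)))
\end{equation*}
obtained as the initial segment of (\ref{long2}) with $k=3$. Since $i_3^{(2)}$ is injective, $Ext^2(\Delta(h),\Delta(h(4)))=\ker\pi_3^{(2)}$, so the task is to compute the kernel of $\pi_3^{(2)}\colon\mathbb{Z}_3\to Ext^2(\Delta(h),\Delta(h(3)))$. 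I would not attack $\pi_3^{(2)}$ directly; instead I would post-compose with the injection $i_2^{(2)}\colon Ext^2(\Delta(h),\Delta(h(3)))\hookrightarrow Ext^2(\Delta(h),D_{a+2}\otimes\Lambda^{b-2})$ (which is injective by the exact sequence (\ref{glyko}) in the proof of Proposition 3.1, specifically because $Ext^1(\Delta(h),\Delta(h(3)))=0$ forces $i_2^{(2)}$ to be a monomorphism). Because $i_2^{(2)}$ is injective, $\ker\pi_3^{(2)}=\ker(i_2^{(2)}\circ\pi_3^{(2)})=\ker\phi$, so it suffices to understand the map $\phi\colon\mathbb{Z}_3\to\mathbb{Z}_3$.

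Now I would invoke the computation already carried out in Proposition 4.6, namely $\phi(\pi(\Gamma_{a,b}))=(a+b)\pi(\gamma_{a,b})$, together with Proposition 4.2 (the element $\pi(\Gamma_{a,b})$ generates $Ext^2(\Delta(h),D_{a+3}\otimes\Lambda^{b-3})\cong\mathbb{Z}_3$) and the fact recalled from \cite{MS} that $\pi(\gamma_{a,b})$ generates $Ext^2(\Delta(h),D_{a+2}\otimes\Lambda^{b-2})\cong\mathbb{Z}_3$. Thus in terms of these generators $\phi$ is multiplication by $(a+b)\bmod 3$. If $3\nmid a+b$, then $\phi$ is an isomorphism, hence $\ker\phi=0$ and $Ext^2(\Delta(h),\Delta(h(4)))=0$; if $3\mid a+b$, then $\phi=0$, hence $\ker\phi=\mathbb{Z}_3$ and $Ext^2(\Delta(h),\Delta(h(4)))=\mathbb{Z}_3$. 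Both cases are summarized by $Ext^2(\Delta(h),\Delta(h(4)))=\mathbb{Z}_{\gcd(3,a+b)}$, which is exactly $\mathbb{Z}_t$ with $t=\gcd(3,a+b)$ as stated in Theorem 1.1.

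I should also make sure the exact sequence I am using is legitimate in the relevant range: this needs $n\ge b+1$ (so that Lemma \ref{basiclemma} and Lemma \ref{canonicalmatrix} apply), $b\ge 3$ (so that $h(4)=(a-4,1^{b-4})$ still makes sense as a hook and $D_{a+3}\otimes\Lambda^{b-3}$ is nonzero — I would treat the degenerate small-$b$ cases separately, noting they are covered by the boundary analysis or give $0$ trivially), and the vanishing $Hom(\Delta(h),\Delta(h(3)))=0$ and $Hom(\Delta(h),\Delta(h(4)))=0$, which hold because these are distinct irreducibles over $S_\mathbb{Q}(n,r)$, exactly as argued before (\ref{long1}). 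The main obstacle in this whole argument is not here — it is entirely contained in Propositions 4.1, 4.2, and 4.6, i.e. pinning down the explicit cyclic generator $\pi(\Gamma_{a,b})$ and tracking it through the straightening law to compute $\phi(\pi(\Gamma_{a,b}))$. Once those are in hand, the present proposition is a short homological-algebra deduction, and I would keep its proof correspondingly brief.
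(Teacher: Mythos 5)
Your argument is the same as the paper's: identify $Ext^2(\Delta(h),\Delta(h(4)))$ with $\ker\pi_3^{(2)}$ from the sequence (\ref{long2}) at $k=3$, use the injectivity of $i_2^{(2)}$ to replace this by $\ker\phi$, and read off $\ker\phi$ from Propositions 4.2 and 4.6, which exhibit $\phi$ as multiplication by $a+b$ modulo $3$ between two copies of $\mathbb{Z}_3$; this is correct. One small caveat: your parenthetical reason for the injectivity of $i_2^{(2)}$ is imprecise, since $Ext^1(\Delta(h),\Delta(h(3)))$ need not vanish (Remark 3.1 allows it to be $\mathbb{Z}_2$, with $Ext^1(\Delta(h),\Delta(h(2)))=0$ instead) --- the injectivity really comes from the exactness of (\ref{long2}) at $k=2$, i.e.\ from (\ref{glyko}) itself, which you cite anyway, so the deduction stands.
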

\begin{proof}We noted at the beginning of Section 4.3 that $\phi$ factors as follows
	\begin{align*}
	\phi:&Ext^2(\Delta(h),D_{a+3}\otimes\Lambda^{b-3})\xrightarrow{\pi_3^{(2)}}Ext^2(\Delta(h),\Delta(h(3)))\\
	&\xrightarrow{i_2^{(2)}}Ext^2(\Delta(h),D_{a+2}\otimes\Lambda^{b-2}).
	\end{align*}	
	We also saw in Proposition 4.6 that $$\phi(\pi(\Gamma_{a,b}))=(a+b)\pi(\gamma_{a,b}),$$ where $\pi(\Gamma_{a,b})$ is a generator of $Ext^2(\Delta(h),D_{a+3}\otimes \Lambda^{b-3})=\mathbb{Z}_3$ and $\pi(\gamma_{a,b})$ is a generator of $Ext^2(\Delta(h),D_{a+2}\otimes \Lambda^{b-2})=\mathbb{Z}_3$. Hence, $\phi$ is an isomorphism if and only if $3\notdivides a+b$ if and only if $\ker \phi =0$.
Using (\ref{long2}) for $k=3$, we obtain the exact sequence
\begin{align*}
0&\longrightarrow Ext^2(\Delta(h),\Delta(h(4)))\xrightarrow{i_3^{(2)}}Ext^2(\Delta(h),D_{a+3}\otimes\Lambda^{b-3})\\
&\xrightarrow{\pi_3^{(2)}}Ext^2(\Delta(h),\Delta(h(3))),
\end{align*}
and so $Ext^2(\Delta(h),\Delta(h(4)))=\ker \pi_3^{(2)}$. As $i_2^{(2)}$ is an injection, we obtain $\ker\phi = \ker \pi_3^{(2)}$ and so we have $Ext^2(\Delta(h),\Delta(h(4)))=\ker \phi$. We have $\ker \phi =0$ if and only if $3\notdivides a+b$ and, also, $\ker \phi=\mathbb{Z}_3$ if and only if $3| a+b$. We conclude that $Ext^2(\Delta(h),\Delta(h(4)))=\mathbb{Z}_{gcd(3,a+b)}$.
\end{proof}

\subsection{Modular $Ext^1$}
Let $K$ be an infinite field of characteristic $p > 0$. The corollary below is a statement for $S_K(n,r)$-modules, where $S_K(n,r)$ is the Schur algebra for $GL_n(K)$ corresponding to homogeneous polynomial representations of $GL_n(K)$ of degree $r$ \cite{Gr}. By $\Delta_K(h)$ we denote the Weyl module of highest weight $h$ for $S_K(n,r)$. From \cite{Ma} it follows that $Ext^1(\Delta_K(h),\Delta_K(h(1)))$ is one dimensional if $p|a+b$ and zero otherwise. Using the universal coefficient theorem (\cite{AB}, Thm. 5.3), Theorem 3.5 by \cite{MS}, and Theorem 1.1 we have the following complete picture.

\begin{cor}
	Let $h=(a,1^b)$ and  $h(k)=(a+k,1^{b-k})$, where $ 1 \le k \le b$. Suppose $n \ge b+1$. Then $Ext^1(\Delta_K (h),\Delta_K (h(k)))$ is 1-dimensional in each of the following cases:
		\begin{itemize}
			\item $k=1$ and $p$ divides $a+b$,
			\item $k=2$ and $p$ divides $ 2(a+b)/{gcd(2,a+b)}^2$,
	 		\item $k=3$ and $p$ divides $ 6/{gcd(2,a+b+1)gcd(3,a+b)}$,
			\item $k=4$ and $p$ divides $ {2gcd(3,a+b)}/{gcd(2,a+b)}$,
			\item $k\geq 5$ and $p=2$, $a+b+k$ is odd.
		\end{itemize}
		In all other cases, $Ext^1(\Delta_K (h),\Delta_K (h(k)))=0.$
\end{cor}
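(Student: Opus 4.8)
The plan is to combine three inputs: the universal coefficient theorem of \cite{AB}, Thm.~5.3, which relates $Ext^1_{S_K(n,r)}(\Delta_K(h),\Delta_K(h(k)))$ to the integral groups $Ext^1(\Delta(h),\Delta(h(k)))$ and $Ext^2(\Delta(h),\Delta(h(k)))$; the integral $Ext^1$ computation of \cite{MS}, Thm.~3.5; and our Theorem~1.1 for the integral $Ext^2$. Concretely, the universal coefficient theorem gives, for each $k$, a short exact sequence
\begin{equation*}
0 \to Ext^1(\Delta(h),\Delta(h(k))) \otimes K \to Ext^1_{S_K(n,r)}(\Delta_K(h),\Delta_K(h(k))) \to \mathrm{Tor}^{\mathbb{Z}}_1(Ext^2(\Delta(h),\Delta(h(k))),K) \to 0,
\end{equation*}
so that the $K$-dimension of the modular group is the sum of the dimensions of the two outer terms. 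For $K$ of characteristic $p$, $\mathbb{Z}_m \otimes K$ has dimension $1$ iff $p\mid m$, and $\mathrm{Tor}^{\mathbb{Z}}_1(\mathbb{Z}_m,K)$ has dimension $1$ iff $p\mid m$; for a free or zero group both contributions vanish. So the whole computation reduces to reading off, case by case in $k$, the exact orders of the (cyclic) integral groups $Ext^1$ and $Ext^2$, and recording for which primes $p$ at least one of the two divisibility conditions holds.

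The key steps, in order, are: first, record the integral data. For $k=1$: $Ext^1(\Delta(h),\Delta(h(1)))$ is cyclic of order $\tfrac{a+b}{\gcd(\dots)}$-type from \cite{Ma}/\cite{MS}, while $Ext^2(\Delta(h),\Delta(h(1)))=0$ (noted at the start of Section~3), so only the $Ext^1 \otimes K$ term contributes and we get dimension $1$ iff $p\mid a+b$. For $k=2$: from \cite{MS} the integral $Ext^1$ has order involving $(a+b)/\gcd(2,a+b)$ and by Theorem~1.1 the integral $Ext^2$ is $\mathbb{Z}_s$ with $s=(a+b)/\gcd(2,a+b)$; combining the $\otimes K$ and $\mathrm{Tor}$ contributions and simplifying the divisibility condition gives the stated criterion $p \mid 2(a+b)/\gcd(2,a+b)^2$. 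For $k=3$: integral $Ext^1$ from \cite{MS} together with $Ext^2 = \mathbb{Z}_{3/t}$, $t=\gcd(3,a+b)$, from Theorem~1.1; for $k=4$: integral $Ext^1$ from \cite{MS} together with $Ext^2 = \mathbb{Z}_t$; for $k\ge 5$: integral $Ext^2 = 0$ by Theorem~1.1, so only the $Ext^1$-contribution survives, and \cite{MS} gives that $Ext^1$ is $\mathbb{Z}_2$ exactly when $a+b+k$ is odd, yielding the condition $p=2$. In each case the second step is purely arithmetic bookkeeping: take the two relevant cyclic orders, form the "$p$ divides the first OR $p$ divides the second" condition, and check it is equivalent to the single divisibility statement displayed in the Corollary (using, e.g., that $\gcd(3,a+b)$, $\gcd(2,a+b)$, $\gcd(2,a+b+1)$ are the only prime obstructions that appear, and that $2$ and $3$ are coprime).

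I expect the main obstacle to be the bookkeeping in the mixed cases $k=2,3,4$, where \emph{both} the $Ext^1\otimes K$ term and the $\mathrm{Tor}_1$ term can be nonzero: one must correctly extract the exact order of the integral $Ext^1$ group from \cite{MS}, Thm.~3.5 (which for $k=3$ and $k=4$ is itself a gcd of several binomial coefficients), and then verify that "$p$ divides the $Ext^1$-order or $p$ divides the $Ext^2$-order" collapses to the clean single condition stated — e.g. for $k=3$ that it is exactly $p \mid 6/(\gcd(2,a+b+1)\gcd(3,a+b))$. This requires a short case analysis on the residues of $a+b$ modulo $2$ and $3$ (four cases), but no deep argument. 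The remaining cases $k=1$ and $k\ge 5$ are immediate once the relevant integral group vanishes, since only a single divisibility condition remains.
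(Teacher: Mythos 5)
Your proposal is correct and follows exactly the route the paper takes: the universal coefficient theorem of \cite{AB} applied to the integral $Ext^1$ groups of \cite{MS}, Thm.~3.5, and the integral $Ext^2$ groups of Theorem 1.1, followed by the case-by-case divisibility bookkeeping (including the check, via residues of $a+b$ modulo $2$ and $3$, that the $\otimes K$ and $\mathrm{Tor}_1$ contributions never both occur for a single prime $p$). The paper's own proof is just this argument stated in one line, so there is nothing to add.
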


\end{document}